\newtheorem{theorem}{Theorem}
\numberwithin{theorem}{section}
\newtheorem{proposition}[theorem]{Proposition}
\newtheorem{lemma}[theorem]{Lemma}
\newtheorem{definition}[theorem]{Definition}
\newtheorem{remark}[theorem]{Remark}
\newtheorem{question}[theorem]{Question}
\newtheorem{example}[theorem]{Example}
\newtheorem{algorithm}[theorem]{Algorithm}
\newcommand{\RR}{\mathbb{R}}
\newcommand{\PP}{\mathbb{P}}
\newcommand{\ZZ}{\mathbb{Z}}
\DeclareMathOperator{\Del}{Del}
 \date{}
\title{\textbf{Schottky Algorithms: \\ Classical  meets Tropical}}
\author{Lynn Chua, Mario Kummer and Bernd Sturmfels}
\begin{document}
\maketitle

\begin{abstract} \noindent
We present a new perspective on the Schottky problem that links numerical computing
with tropical geometry. The task is to decide whether a 
symmetric matrix defines a Jacobian, and, if so, to compute the curve
and its canonical embedding. We offer solutions and their implementations in
genus four, both classically and tropically. 
The locus of cographic matroids arises from tropicalizing
the Schottky--Igusa modular form.
\end{abstract}

\section{Introduction}

The \textit{Schottky problem} \cite{gru} concerns the characterization of Jacobians of genus $g$ curves
 among all abelian varieties of dimension $g$. The latter are parametrized by the
 \textit{Siegel upper-half space} $\mathfrak{H}_g$, i.e.~the set of complex symmetric $g\times g$ 
 matrices $\tau$ with positive definite imaginary part. The {\em Schottky locus}
 $ \mathfrak{J}_g$ is the subset of matrices $\tau$ in $\mathfrak{H}_g$ that represent Jacobians.
 Both sets are complex analytic spaces whose dimensions reveal that the inclusion is proper for $g \geq 4$:
\begin{equation}
\label{eq:dimensions}
  {\rm dim}(\mathfrak{J}_g) \,= \, 3g -3 \qquad \hbox{and} \qquad
 {\rm dim}(\mathfrak{H}_g) \,= \, \binom{g+1}{2}. 
 \end{equation}
 For $g=4$, the dimensions in (\ref{eq:dimensions}) are $9$ and $10$,
 so $\mathfrak{J}_4$ is an analytic hypersurface in $\mathfrak{H}_4$.
 The equation defining this hypersurface is a polynomial of degree $16$ in
 the theta constants. First constructed by Schottky \cite{Schottky1888},
and further developed by Igusa \cite{igusa}, this modular form
embodies the theoretical solution (cf.~\cite[\S 3]{gru}) to the classical Schottky problem for $g=4$.

The Schottky problem also exists in tropical geometry \cite{mz}.
The {\em tropical Siegel space} $\mathfrak{H}_g^{\rm trop}$ is the 
cone of positive definite $g \times g$-matrices, endowed with the fan structure given by the
second Voronoi decomposition. The {\em tropical Schottky locus}
$\mathfrak{J}_g^{\rm trop}$ is the subfan indexed by cographic matroids \cite[Theorem 5.2.4]{BMV}.
A detailed analysis for $g\leq 5$ is found in \cite[Theorem 6.4]{chan12}.
It is known, e.g.~by~\cite[\S 6.3]{BMV}, that the inclusion 
$\mathfrak{J}_g^{\rm trop} \subset \mathfrak{H}_g^{\rm trop}$ correctly
tropicalizes the complex-analytic inclusion
$\mathfrak{J}_g \subset \mathfrak{H}_g$. However,
 it has been an open~problem (suggested in \cite[\S 9]{RSSS}) to find
a direct link between the equations that govern these two inclusions.

We here solve this problem, and 
develop computational tools for the Schottky problem,
both classically and tropically.
We distinguish between the {\em Schottky Decision Problem}
and the {\em Schottky Recovery Problem}. For the former, the input 
is a matrix $\tau$ in $\mathfrak{H}_g$ resp.~$\mathfrak{H}_g^{\rm trop}$,
possibly depending on parameters, and we must decide whether
$\tau$ lies in $\mathfrak{J}_g$ resp.~$\mathfrak{J}_g^{\rm trop}$.
For the latter, $\tau$ already passed that test, and we 
compute a curve whose Jacobian is given by~$\tau$.
The recovery problem also makes sense for $g=3$,
both classically \cite{bitangents} and tropically~\cite[\S 7]{BBC}.

This paper is organized as follows. In Section \ref{sec:classic} we tackle the
classical Schottky problem as a task in
numerical algebraic geometry \cite{DvH, HS, SD}. For $g=4$,
we utilize the software {\tt abelfunctions} \cite{abelfunctions} to
test whether the Schottky--Igusa modular form vanishes. In the affirmative case, 
we use a numerical version of Kempf's method \cite{kempf} to compute a
canonical embedding into $\PP^3$.
Our main results in Section \ref{sec:tropical}  are Algorithms \ref{alg:TSD} and \ref{alg:TSR}.
Based on the work in \cite{sikiric, sgsw, frank, voronoi},
these furnish a  computational solution to the tropical Schottky problem.
Key ingredients are  cographic matroids and the f-vectors of Voronoi polytopes.

Section \ref{sec:four} links the classical and tropical Schottky scenarios.
Theorem \ref{thm:recoverfromtheta} expresses the edge lengths of a 
metric graph in terms of tropical theta constants, and
Theorem~\ref{thm:admissible} explains what happens to the
Schottky--Igusa modular form in the tropical limit.
We found it especially gratifying to discover how the cographic locus
is encoded in the classical theory.

\smallskip

The software we describe in this paper is made available at the supplementary website
\begin{equation}
\label{eq:url}
 \hbox{\url{http://eecs.berkeley.edu/~chualynn/schottky}}
 \end{equation}
This contains several pieces of code for the tropical Schottky problem,
as well as a more coherent \texttt{Sage} program 
for the classical Schottky problem that makes calls to
{\tt abelfunctions}.

\section{The Classical Schottky Problem}\label{sec:classic}

We fix $g=4$, and review theta functions and 
Igusa's construction \cite{igusa} of the equation that
cuts out~$\mathfrak{J}_4$. 
For any vector $m\in\mathbb{Z}^{8}$ we write
 $m= (m',m'') $ for suitable $m',m''\in\mathbb{Z}^4$.
  The \textit{Riemann theta function} 
with characteristic $m$~is the following function of  $\tau\in\mathfrak{H}_4$ and
$z\in\mathbb{C}^4$:
\begin{align}\label{eqn:thetafunc}
 \theta[m](\tau,z)\,\,=\,\,\,\sum_{n\in\mathbb{Z}^4}\exp\left[\pi \textnormal{i} (n+\frac{m'}{2})^t \tau (n+\frac{m'}{2})+2\pi\textnormal{i}(n+\frac{m'}{2})^t(z+\frac{m''}{2})\right].
\end{align}
For numerical computations of the theta function one has to make a good choice of lattice points to sum over in order for this series to converge rapidly  \cite{DHBvHS, SD}.
We use the software {\tt abelfunctions} \cite{abelfunctions}
to evaluate $\theta[m]$ for arguments $\tau$ and $z$
with floating point coordinates.

Up to a global multiplicative factor,
the definition (\ref{eqn:thetafunc}) depends only on the image of $m$ in
$ (\mathbb{Z}/2\mathbb{Z})^8$.
The sign of the characteristic $m$ is $\,e(m)=(-1)^{(m')^t m''}$.
Namely, $m$ is \textit{even} if $e(m)=1$ and \textit{odd} if $e(m) = -1$.
A triple $\{m_1,m_2,m_3\}\subset (\mathbb{Z}/2\mathbb{Z})^8$
 is called \textit{azygetic} if $e(m_1)e(m_2)e(m_3)e(m_1+m_2+m_3) = -1$.
Suppose that this holds. Then we choose a rank $3$ subgroup $N$ of $
(\mathbb{Z}/2\mathbb{Z})^8$ such that all elements of
 $(m_1{+}N) \cup (m_2{+}N) \cup (m_3{+}N)$ are even.
 
We  consider the following three products of eight {\em theta constants} each:
\begin{equation}
\label{eq:pidef}
  \pi_i \,\,\,\,=\prod_{m\in m_i+N} \! \theta[m](\tau,0) \qquad
  \hbox{for} \quad i=1,2,3.
\end{equation}
 
\begin{theorem}[Igusa \cite{igusa}]\label{thm:igusa}
 The function $\mathfrak{H}_4 \rightarrow \mathbb{C}$ that takes
 a symmetric $4 {\times} 4$-matrix $\tau$ to
\begin{equation}
\label{eq:schottkyigusa} \pi_1^2+\pi_2^2+\pi_3^2-2\pi_1\pi_2-2 \pi_1\pi_3-2 \pi_2\pi_3
\end{equation}
is independent of the choices above. It vanishes
if and only if $\tau $ lies in the closure of the Schottky locus $\mathfrak{J}_4$.
\end{theorem}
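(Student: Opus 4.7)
My plan has two independent parts: first, show the expression in (\ref{eq:schottkyigusa}) is well-defined (independent of $\{m_1,m_2,m_3\}$ and $N$), and second, identify its vanishing locus with $\overline{\mathfrak{J}_4}$. The guiding observation for both parts is the classical factorization
\begin{equation*}
\pi_1^2+\pi_2^2+\pi_3^2-2\pi_1\pi_2-2\pi_1\pi_3-2\pi_2\pi_3 \,=\, \prod_{\epsilon_2,\epsilon_3 \in \{\pm 1\}} \bigl(\sqrt{\pi_1}+\epsilon_2\sqrt{\pi_2}+\epsilon_3\sqrt{\pi_3}\bigr),
\end{equation*}
so the function vanishes at $\tau$ if and only if one of the four signed square-root sums does. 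Each factor is (a choice of branch of) a \emph{Schottky--Jung relation}, and Riemann's theta identities from the theory of Prym varieties single out exactly these sums.

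For independence of $N$, once the azygetic triple $\{m_1,m_2,m_3\}$ is fixed, I would show that any two admissible rank-$3$ subgroups $N, N'$ are conjugate under the subgroup of $\mathrm{Sp}(8,\mathbb{Z}/2)$ stabilizing $\{m_1,m_2,m_3\}$; then symplectic transformation formulae for theta constants produce the same $(\pi_1,\pi_2,\pi_3)$ up to a common nonvanishing scalar and a permutation, and the symmetric polynomial (\ref{eq:schottkyigusa}) is invariant under such an action. For independence of the triple, I would use that $\mathrm{Sp}(8,\mathbb{Z}/2)$ acts transitively on azygetic triples of even characteristics (a standard fact from Igusa's theta group theory), and that the expression is a modular form of a specific weight that gets scaled uniformly under the action, so the normalization by the symmetric polynomial kills the scale.

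For the vanishing direction, when $\tau$ is the period matrix of a smooth genus-$4$ curve $C$, I would invoke the Schottky--Jung proportionality: for a suitable unramified double cover of $C$ the Prym period matrix $\tau'$ satisfies Riemann's theta identities in the form $\theta[m](\tau',0)^2 = c\cdot\theta[m_i+n](\tau,0)\theta[m_i+n'](\tau,0)$, which when grouped over the coset $m_i+N$ yields the vanishing of one of the signed sums $\sqrt{\pi_1}\pm\sqrt{\pi_2}\pm\sqrt{\pi_3}$, hence of the full product. This shows $\mathfrak{J}_4 \subset \{F=0\}$, and by continuity the closure is contained too.

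The main obstacle, and the heart of Igusa's argument, is the converse: showing that the modular form $F$ in (\ref{eq:schottkyigusa}) is not identically zero and that the Schottky hypersurface $\{F=0\}$ contains no extra irreducible component beyond $\overline{\mathfrak{J}_4}$. One shows $F\not\equiv 0$ by evaluating on a product of lower genus Jacobians or on a diagonal matrix where the theta constants can be computed explicitly; since $\overline{\mathfrak{J}_4}$ is an irreducible analytic hypersurface in $\mathfrak{H}_4$ by (\ref{eq:dimensions}) and $F$ is a cusp form vanishing on it, both divisors are effective, and a weight/degree count (Igusa computes the weight of $F$ to be $8$, matching that of any modular form defining $\overline{\mathfrak{J}_4}$) forces equality of zero loci. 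I would simply cite Igusa \cite{igusa} for this final step rather than reproduce the modular-forms computation, which is the technical core of his paper.
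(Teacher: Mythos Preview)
The paper does not prove Theorem~\ref{thm:igusa}; it is stated as a citation to Igusa~\cite{igusa} and used as a black box. So there is no ``paper's own proof'' to compare against. Your outline is a reasonable sketch of the classical route (Schottky--Jung proportionality for the containment $\overline{\mathfrak{J}_4}\subset\{F=0\}$, transitivity of the $\mathrm{Sp}(8,\mathbb{Z}/2)$-action for well-definedness, and an irreducibility/weight argument for the reverse containment), and indeed you yourself defer the hard direction to~\cite{igusa}. That is exactly what the paper does for the entire statement. If the intent of the exercise is to reproduce the paper's treatment, the correct answer is simply: no proof is given; the result is quoted from Igusa.
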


We refer to the expression (\ref{eq:schottkyigusa}) as the {\em Schottky--Igusa modular form}.
This is a polynomial of degree $16$ in the theta constants  $\theta[m](\tau,0)$. Of course, the
formula is unique only modulo the ideal that defines the embedding of the moduli space $\mathcal{A}_4$ in
the $\PP^{15}$ of theta constants. 

Our implementation uses the polynomial that is given by the following specific choices:
$$
 m_1=\begin{pmatrix}1 & 1 \\ 0 & 0 \\ 1 & 1 \\ 0 & 0     \end{pmatrix}\!,\,
m_2=\begin{pmatrix} 0 & 1 \\ 0 & 0 \\ 0 & 0 \\ 1 & 0    \end{pmatrix}\!,\,
m_3=\begin{pmatrix} 0 & 1 \\ 0 & 0 \\ 1 & 1 \\ 1 &  1  \end{pmatrix}\!,\,\,
  n_1=\begin{pmatrix} 0 & 1 \\ 0 & 1 \\ 0 &1 \\ 1 & 0  \end{pmatrix}\!,\,
  n_2=\begin{pmatrix} 0 & 0 \\ 0 & 0 \\ 1 & 0 \\ 1 & 1   \end{pmatrix}\!,\,
  n_3=\begin{pmatrix} 0 & 1 \\ 0 & 0  \\ 1 & 1\\ 0 & 1 \end{pmatrix}\!.
$$
The vectors $n_1,n_2,n_3$ generate the subgroup $N$ in $ (\mathbb{Z}/2\mathbb{Z})^8$.
One checks that the triple $\{m_1,m_2,m_3\}$ is azygetic
and that the three cosets $m_i + N$ consist of even elements only.
The computations to be described next were done with the {\tt Sage} library
{\tt abelfunctions} \cite{abelfunctions}. 

The algorithm in \cite{DvH} finds the Riemann matrix $\tau \in \mathfrak{J}_g$ of a 
plane curve in  $\mathbb{C}^2$. It is implemented in {\tt abelfunctions}.  We first check
that (\ref{eq:schottkyigusa})  does indeed vanish for such~$\tau$.

\begin{example}\label{exp:schottkycurve} \rm
The plane curve $\,y^5+x^3-1 = 0\,$ has genus four. Its 
  Riemann matrix $\tau$~is
 \[
  \begin{pmatrix}   \,\,0.16913 + 1.41714\textnormal{i}  &  -0.81736 - 0.25138\textnormal{i} & \! -0.05626 - 0.44830\textnormal{i}  & \,\,0.24724 + 0.36327\textnormal{i}\\
\! -0.81736 - 0.25138\textnormal{i}  & -0.31319 + 0.67096\textnormal{i} &\! -0.02813 - 0.57155\textnormal{i}  & \,\,0.34132 + 0.40334\textnormal{i}\\
\!-0.05626 - 0.44830\textnormal{i} & -0.02813 - 0.57155\textnormal{i}  &  \,0.32393 + 1.44947\textnormal{i}  & -0.96494 - 0.63753\textnormal{i} \\
  \, \, 0.24724 + 0.36327\textnormal{i} &  \,\,\, 0.34132 + 0.40334\textnormal{i} & \! \! -0.96494 - 0.63753\textnormal{i}   & \,\,0.62362 + 0.73694\textnormal{i} \end{pmatrix} \! .
 \]
Evaluating the $16$ theta constants $\theta[m](\tau,0)$ numerically with {\tt abelfunctions}, we find that
\begin{eqnarray*}
 \pi_1^2+\pi_2^2+\pi_3^2&=&-5.13472888270289 + 6.13887870578982\textnormal{i},  \\
 2(\pi_1\pi_2+\pi_1\pi_3+\pi_2\pi_3)&=&-5.13472882638710 + 6.13887931435788\textnormal{i}.
\end{eqnarray*}
We trust that (\ref{eq:schottkyigusa}) is zero, and conclude that
$\tau$ lies in the Schottky locus $\mathfrak{J}_4$, as expected.
\end{example}

Suppose now that we are given a matrix  $\tau$ that depends on one or two parameters,
so it traces out a curve or surface in $\mathfrak{H}_4$. Then we can use our numerical
method to determine the Schottky locus  inside that curve or surface. Here is an illustration
for a surface in $\mathfrak{H}_4$.

 \begin{example}\label{exp:shimuracurve} \rm
 The following one-parameter family of genus $4$ curves is found in \cite[\S 2]{GM}:
 $$ y^6 \,\,=\,\, x(x+1)(x-t). $$
 This is both a Shimura curve and a Teichm\"uller curve.
Its Riemann matrix is  $\rho(t) = Z_2^{-1} Z_1$ where
$Z_1,Z_2$ are given in \cite[Prop.~6]{GM}.
Consider the following two-parameter family  in $\mathfrak{H}_4$:
\begin{equation}
\label{eq:taufamily}
 \tau(s,t)  \,= \, s \cdot {\rm diag}(2,3,5,7) \,\,+\,\, \rho(t).
\end{equation}
We are interested in the restriction of
the Schottky locus $\mathfrak{J}_4$ to the $(s,t)$-plane.
For our experiment, we assume that the two parameters satisfy
 $s \in [-0.5,0.5]$ and $ \lambda^{-1}(t) \in [\,\textnormal{i},\textnormal{i}+1]$,
 where $\lambda$ is the function  in \cite[Prop.~6]{GM}. 
Using {\tt abelfunctions}, we computed the absolute value of
the modular form (\ref{eq:schottkyigusa})
at 6400 equally spaced rational points in the square
$[-0.5,0.5] \times [i,i+1]$. That graph 
 is shown in Figure \ref{eq:taufamily}.
For $s$ different from zero, the smallest absolute value of
(\ref{eq:schottkyigusa}) is $4.3 \times 10^{-3}$.
For $s=0$, all absolute values are below $ 2.9 \times 10^{-8}$.
Based on this numerical evidence, we conclude that the
Schottky locus of our family is the line $s=0$.
\end{example}

\begin{figure}[h]
  \begin{center}
\includegraphics[height=8.4cm]{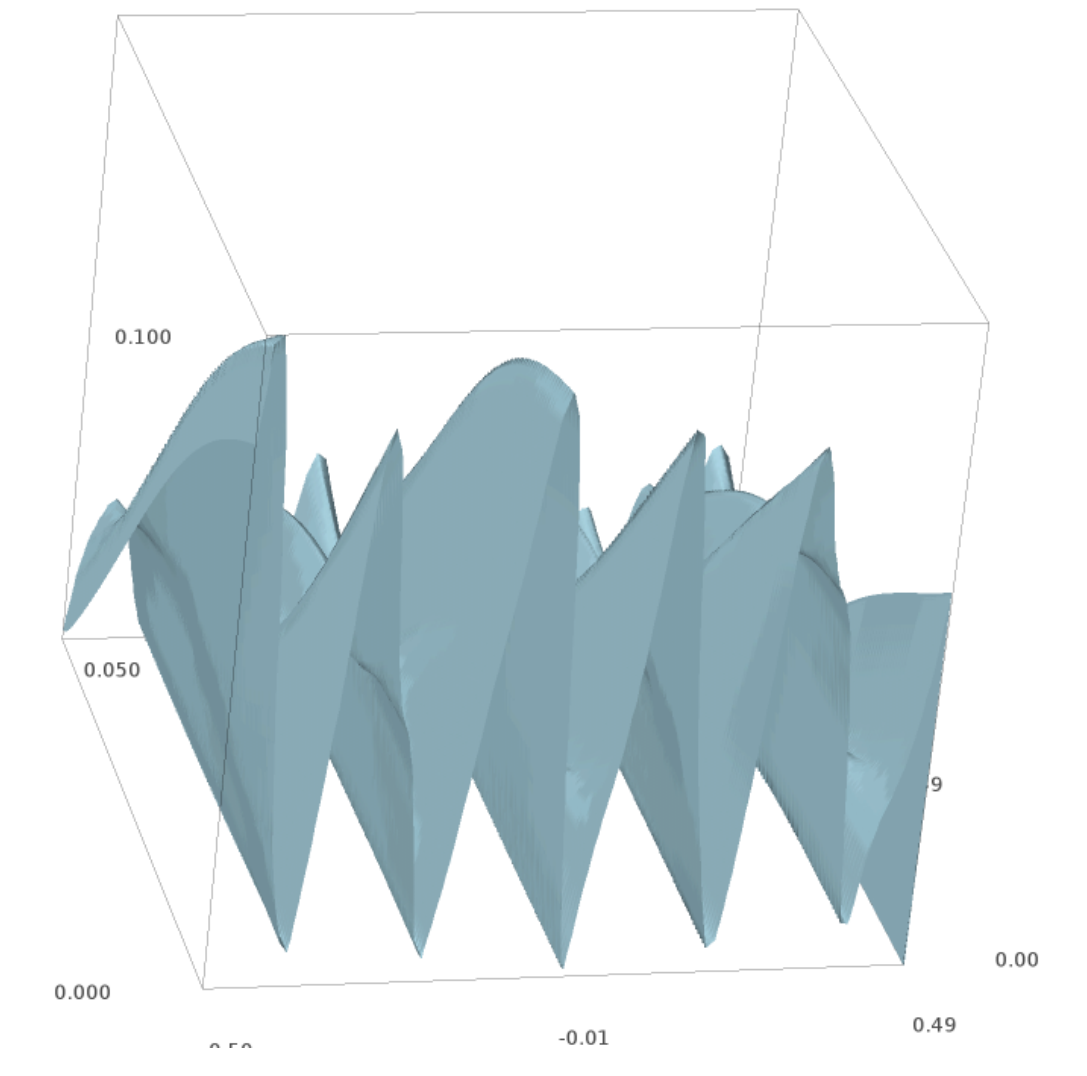}
\vspace{-0.2in}
\end{center}    \caption{\label{fig:eins}
Absolute value of the Schottky--Igusa modular form
on the 2-parameter family~(\ref{eq:taufamily}).
}
 \end{figure}

We now come to the Schottky Recovery Problem. Our input is a matrix
$\tau$ in $ \mathfrak{J}_4$. Our task is to compute a curve whose
Riemann matrix equals $\tau$. We use the following result
from Kempf's paper \cite{kempf}.
The {\em theta divisor} in the Jacobian 
$\, \mathbb{C}^4/(\mathbb{Z}^4 + \mathbb{Z}^4 \tau )\,$
is the zero locus $\Theta^{-1}(0)$ of the Riemann theta function
$\,\Theta(z) := \theta[0](\tau,z)$. For generic $\tau$ this divisor is singular at
precisely two points. These represent $3$-to-$1$ maps from
the curve to $\PP^1$. We compute a vector $z^* \in \mathbb{C}^4$ that is
a singular point of $\Theta^{-1}(0)$ by solving the system of five equations
\begin{equation}
\label{eq:findsingular} 
\Theta(z) \, = \, \frac{\partial \Theta}{\partial z_1}(z) = 
 \frac{\partial \Theta}{\partial z_2}(z) = 
  \frac{\partial \Theta}{\partial z_3}(z) = 
   \frac{\partial \Theta}{\partial z_4}(z) =  0.
\end{equation}
The Taylor series of the Riemann theta function
$\Theta$ at the singular point $z^*$ has the form
\begin{equation}
\label{eq:taylorseries}
\Theta(z^*+x) \,= \, f_2(x) \,+\, f_3(x) \, + \,f_4(x) \,+ \,\,\hbox{higher order terms}, 
\end{equation}
where $f_s$ is a homogeneous polynomial of degree $s$ in $ x = (x_1,x_2,x_3,x_4)$.

\begin{proposition}[Kempf \cite{kempf}]
The canonical curve with Riemann matrix $\tau$ is the
degree $6$ curve in $\PP^3$ that is defined by the quadratic equation $f_2 = 0$
and the cubic equation $ f_3 = 0$.
\end{proposition}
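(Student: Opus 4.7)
The plan is to use Riemann's theorem on the theta divisor together with the classical geometry of canonical genus-$4$ curves, reducing the whole proof to a short Taylor expansion at the singular point $z^*$.

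By Riemann's theorem there exists $\kappa\in\CC^4$ such that $\Theta^{-1}(0)=W_3-\kappa$, where $W_3\subset\mathrm{Jac}(C)$ is the image of $\mathrm{Sym}^3(C)$ under the Abel--Jacobi map $\phi$. A point of $\Theta^{-1}(0)$ is singular precisely when the associated degree-$3$ line bundle $L$ satisfies $h^0(L)\geq 2$, i.e.\ is a $g^1_3$. Every genus-four curve is trigonal, and for generic $\tau\in\mathfrak{J}_4$ the curve $C$ is non-hyperelliptic with exactly two such pencils, giving two singular points of $\Theta^{-1}(0)$ interchanged by $z\mapsto -z$. The Riemann--Kempf singularity theorem further identifies the multiplicity of $\Theta$ at each such point with $h^0(L)=2$, so that the expansion (\ref{eq:taylorseries}) really begins at $f_2$.

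Fix one such $z^*$ with associated bundle $L$. For any $p_0\in C$, trigonality lets us extend $p_0$ to a divisor $p_0+q_1+q_2\in|L|$. The arc $p\mapsto z^*+(\phi(p)-\phi(p_0))$ then lies in $W_3-\kappa=\Theta^{-1}(0)$, since $z^*+\kappa+(\phi(p)-\phi(p_0))=\phi(p+q_1+q_2)\in W_3$. Writing $\phi(p)-\phi(p_0)=tv+t^2w+O(t^3)$ with $v=\phi_K(p_0)$ the canonical image of $p_0$ in $\CC^4$, the vanishing of $\Theta(z^*+tv+t^2w+\cdots)$ expanded in powers of $t$ reads
\[
0 \,=\, t^2 f_2(v)+t^3\bigl(2B_{f_2}(v,w)+f_3(v)\bigr)+O(t^4),
\]
where $B_{f_2}$ is the symmetric bilinear form polarizing $f_2$. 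The coefficient of $t^2$ gives $f_2(\phi_K(p_0))=0$, so $\phi_K(C)\subset Q:=\{f_2=0\}$. Hence the projective tangent line to $\phi_K(C)$ at $[v]$ lies in $T_{[v]}Q=\{[x]:B_{f_2}(v,x)=0\}$, which forces $B_{f_2}(v,w)=0$; the coefficient of $t^3$ then collapses to $f_3(\phi_K(p_0))=0$. Varying $p_0$ shows $\phi_K(C)\subset\{f_3=0\}$ as well.

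Finally, $f_2$ is irreducible and, generically, $f_3$ is not a multiple of $f_2$ by a linear form, so $V(f_2)\cap V(f_3)\subset\PP^3$ is a proper complete intersection of dimension one and degree $2\cdot 3=6$. Since $\phi_K(C)$ is contained in it and has the same degree $2g-2=6$, the two coincide. The main obstacle in the argument is the multiplicity half of the Riemann--Kempf singularity theorem invoked above, namely that $\Theta$ has vanishing order exactly $h^0(L)=2$ at $z^*$. This is a non-trivial classical result whose proof relies on Kempf's cohomological resolution of the singularities of $W_{g-1}$, or equivalently on explicit theta-function identities obtained from the heat equation. Granted that input, everything else is the elementary arc calculation above plus a degree count.
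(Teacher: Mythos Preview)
The paper does not prove this proposition; it is stated with attribution to Kempf \cite{kempf} and then used as a black box in the recovery algorithm. There is therefore no ``paper's own proof'' to compare against.

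Your sketch is a correct outline of the classical argument. Riemann's theorem identifies $\Theta^{-1}(0)$ with $W_3-\kappa$, the Riemann--Kempf singularity theorem fixes the multiplicity at $z^*$ to be $h^0(L)=2$ (so the Taylor expansion really starts at $f_2$), and the arc $p\mapsto z^*+\phi(p)-\phi(p_0)$ inside the theta divisor forces first $f_2$ and then, via the tangent-line step, $f_3$ to vanish along the canonical image $\phi_K(C)$. The deduction $B_{f_2}(v,w)=0$ is sound: the projective tangent line to $\phi_K(C)$ at $[v]$ is spanned by $v$ and $w$ (since $\phi'(t)=v+2tw+\cdots$ represents the canonical map), and if $[v]$ happens to be singular on $\{f_2=0\}$ the conclusion holds trivially because $\nabla f_2(v)=0$. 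The final degree count is the right way to close. Your genericity caveats---$h^0(L)=2$ exactly, $f_2$ irreducible, and $f_3\notin(f_2)$---match the paper's own stipulation that $\tau$ be generic. In short, you have supplied a proof where the authors chose only to cite one.
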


Thus our algorithm for the Schottky Recovery Problem consists of solving the 
five equations (\ref{eq:findsingular}) for $z^* \in \mathbb{C}^4 $, followed by
extracting the polynomials $f_2$ and $f_3$ in the Taylor series (\ref{eq:taylorseries}). Both of 
these steps can be done numerically using the software {\tt abelfunctions} \cite{abelfunctions}.

\begin{example}  \rm
Let $\tau \in \mathfrak{J}_4$ be the Riemann matrix of the genus $4$ curve $C = \{x^3 y^3 +x^3 + y^3 = 1\}$. 
We obtain $\tau$ numerically using {\tt abelfunctions}. We want to recover $C$ from $\tau$. To be precise,
given only $\tau$,  we want to find defining equations
$f_2 = f_3 = 0$ in $\PP^3$ of the canonical embedding of $C$. For that we use evaluations of $\Theta(z)$ and its derivatives in
{\tt abelfunctions}, combined with a numerical optimization routine in {\tt SciPy} \cite{spicy}.
We solve the equations (\ref{eq:findsingular}) starting from random
points $z = u + \tau v$ where $u,v \in \RR^4$
with entries between $0$ and $1$. After several tries,
the local method in {\tt SciPy} converges to the following solution of our equations:
$$ z^* = \bigl(0.55517+0.69801\textnormal{i},  0.53678+0.26881\textnormal{i}, -0.50000-0.58958\textnormal{i},  0.55517+0.69801\textnormal{i}\bigr) . $$
Using (\ref{eq:taylorseries}), we computed the quadric $f_2$, which is nonsingular, as well as the cubic $f_3$:
$$
\begin{small}
 \begin{matrix}
  f_2(x) & =&  
(-3.044822827 + 21.980542613 \textnormal{i})\cdot x_1^2   +( - 237.95207224  + 252.54744634 \textnormal{i})\cdot x_1 x_2 \\   & & 
\,+(- 222.35552015  + 139.95612952 \textnormal{i})\cdot x_1 x_3   +(- 200.66932133  - 16.596272620 \textnormal{i})\cdot x_1 x_4 \\  & &
  + (- 191.16241727 - 85.22650070 \textnormal{i})\cdot x_2^2   + (- 429.11449060 + 167.32094535 \textnormal{i})\cdot x_2 x_3 \\  & & 
+(  - 237.952072 + 252.54744632 \textnormal{i})\cdot x_2 x_4   +(- 206.75896934 + 27.364814282 \textnormal{i})\cdot x_3^2 \\ & & 
+(  222.35552013 + 139.95612953 \textnormal{i})\cdot x_3 x_4   + (- 3.0448227745 + 21.980542601 \textnormal{i})\cdot x_4^2
 \end{matrix}
  \end{small} $$ $$ \begin{small}
\begin{matrix}
f_3(x) \quad =  \quad
(441.375966486 + 61.14097461986 \textnormal{i})\cdot x_1^3   +(  2785.727151434 + 2303.609067429 \textnormal{i})\cdot x_1^2 x_2 \\
+ \quad \cdots\,\,\cdots \quad   + \quad ( 441.3759668263 + 61.14097402189 \textnormal{i})\cdot x_4^3.
\end{matrix}
\end{small}
$$
As a proof of concept we also computed the $120$ tritangent planes numerically directly from $\tau$.
    These planes are indexed by the $120$ odd theta characteristics $m$.
    In analogy to the computation  in \cite[\S 5.2]{SD} of the $28$ bitangents for $g=3$,
    their defining equations are
$$ 
\frac{\partial \theta[m](\tau,z)}{\partial z_1}\biggl|_{z=0} \!\!\! \cdot \,x_1 \,\, + \,\,
\frac{\partial \theta[m](\tau,z)}{\partial z_2}\biggl|_{z=0} \!\!\! \cdot \,x_2 \,\, + \,\,
\frac{\partial \theta[m](\tau,z)}{\partial z_3}\biggl|_{z=0} \!\!\! \cdot \,x_3 \,\, + \,\,
\frac{\partial \theta[m](\tau,z)}{\partial z_4}\biggl|_{z=0} \!\!\! \cdot \,x_4 \,\,\, = \,\,\, 0. 
$$
We verified numerically that each such plane
meets $\{f_2 = f_3 = 0\}$ in three double points.
\end{example}

\begin{remark} \rm
On our website (\ref{eq:url}), we offer a program in {\tt Sage} whose input is
a symmetric $4 \times 4$-matrix $\tau \in \mathfrak{H}_4$, given numerically.
The code decides whether $\tau $ lies in $\mathfrak{J}_4$ and, in the affirmative case, it computes
the canonical curve $\{f_2 = f_3 = 0\}$ and its 120 tritangent planes.
\end{remark}

\section{The Tropical Schottky Problem}\label{sec:tropical}

Curves, their Jacobians, and the Schottky locus have
natural counterparts in the combinatorial setting of tropical geometry.
We review the basics from \cite{BBC, BMV, chan12, mz}.
The role of a curve is played by a connected metric graph $\Gamma = (V,E,l,w)$.
This has vertex set $V$, edge set $E$, a length function $l: E \rightarrow \mathbb{R}_{> 0}$,
and a weight function $w : V \rightarrow \mathbb{Z}_{\geq 0}$.
The genus of $\Gamma$ is
\begin{equation}
\label{eq:graphgenus}
g \,\,= \,\, |E| -|V|  +1 \,+\, \sum_{v\in V} w(v). 
\end{equation}
The moduli space $\mathcal{M}_g^{\rm trop}$ comprises all metric graphs of genus $g$.
This is a stacky fan of dimension $3g-3$.
See \cite[Figure 4]{chan12} for a colorful illustration.
The tropical Torelli map $\,\mathcal{M}_g^{\rm trop} \rightarrow \mathfrak{H}_g^{\rm trop}\,$
takes $\Gamma$ to its (symmetric and positive semidefinite)  Riemann matrix $Q_\Gamma$.
 
Fix a basis for the integral homology $H_1(\Gamma,\mathbb{Z}) \simeq \mathbb{Z}^g$. 
Beside the usual cycles in $\Gamma$, this group has $w(v)$
generators for the virtual cycles at each vertex $v$.
Let $B$ denote the $g \times |E|$ matrix whose columns
record the coefficients of each edge in the basis vectors.
Let $D$ be the $|E| \times |E|$ diagonal matrix whose entries are the edge lengths.
The {\em Riemann matrix} of $\Gamma$~is
\begin{equation}
\label{eq:tropicalriemann} Q_\Gamma \,\, = \,\, B \cdot D \cdot B^t.
\end{equation}
One way to choose a basis is to fix an orientation 
and a spanning tree of $\Gamma$.
Each edge not in that tree then determines a cycle
with $\pm 1$-coefficients.
See \cite[\S 4]{BBC} for details and an example.
Changing the basis of $H_1(\Gamma,\mathbb{Z})$
corresponds to the action of ${\rm GL}_g(\mathbb{Z})$ on
$Q_\Gamma$ by conjugation.

The matrix $Q_\Gamma$ has rank $g - \sum_{v\in V} w(v)$.
We defined $\mathfrak{H}_g^{\rm trop}$ with positive definite matrices.
Those have rank $g$.
For that reason, we now
restrict to graphs with
zero weights, i.e.~$w \equiv 0$.

The {\em tropical Schottky locus}  $\mathfrak{J}_g^{\rm trop}$ 
is the set of all matrices (\ref{eq:tropicalriemann}),
where $\Gamma = (V,E,l)$ runs over graphs
of genus $g$, and $B$ runs over their cycle bases.
This set is known as the {\em cographic locus}
in $\mathfrak{H}_g^{\rm trop}$, because
the $g \times |E|$ matrix $B$ is a representation
of the {\em cographic matroid} of~$\Gamma$.

The Schottky Decision Problem asks for a test of
membership in $\mathfrak{J}_g^{\rm trop}$. To be precise,
given a positive definite matrix $Q$, does there exist
a metric graph $\Gamma$ such that $Q = Q_\Gamma$?

To address this question, we need the polyhedral fan structures
on $\mathfrak{J}_g^{\rm trop}$ and  $\mathfrak{H}_g^{\rm trop}$.
Let $G = (V,E)$ be the graph underlying $\Gamma$,
with $E = \{e_1,e_2,\ldots,e_m\}$. Fix a cycle basis as above.
Let $b_1,b_2,\ldots,b_m$ be the column vectors of the $g \times m$-matrix $B$.
Formula (\ref{eq:tropicalriemann}) is equivalent~to
\begin{equation}
\label{eq:tropicalriemann2}
Q_\Gamma \,\,=\,\, l(e_1) b_1b_1^t + l(e_2) b_2 b_2^t +  \cdots +  l(e_m)b_m b_m^t.
 \end{equation}
The cone of all Riemann matrices for the graph $G$, allowing the edge lengths to vary, is 
\begin{equation}  \label{eqn-cone}
\sigma_{G,B} \,\,=\,\, \mathbb{R}_{>0} \bigl\{ b_1b_1^t, \,b_2 b_2^t, \,\ldots\,,\, b_m b_m^t\bigr\}\,.
\end{equation}
This is a relatively open rational convex polyhedral cone,
 spanned by matrices of rank $1$.
The collection of all cones $\sigma_{G,B}$ is a polyhedral fan whose support
is the Schottky locus $\mathfrak{J}_g^{\rm trop}$.

This fan is a subfan of the
{\em second Voronoi decomposition} of the cone $\mathfrak{H}_g^{\rm trop}$ of positive
definite matrices.
 The latter fan is defined as follows. Fix
a Riemann matrix $Q \in \mathfrak{H}_g^{\rm trop}$ and
consider its quadratic form $\,\mathbb{Z}^g \rightarrow \mathbb{R},\,
x \mapsto x^t Q x$. The values of this quadratic form
define a regular polyhedral subdivision of $\mathbb{R}^g$
with vertices at $\mathbb{Z}^g$. This is denoted ${\rm Del}(Q)$ and
known as the {\em Delaunay subdivision} of $Q$.
Dual to ${\rm Del}(Q)$ is the {\em Voronoi decomposition} of
$\mathbb{R}^g$. The cells of the Voronoi decomposition of $Q$ are
the lattice translates of the \emph{Voronoi polytope}
\begin{equation}
\label{eq:voronoipolytope}
 \bigl\{\,  p\in\mathbb{R}^g \,:\, 2 p^t Q x \leq x^t Q x
 \,\,\,\hbox{for all} \,\,x\in\mathbb{Z}^g \,\bigr\}.
 \end{equation}
This is the set of points in $\mathbb{R}^g$ for which the origin is the closest lattice point,
in the norm given by $Q$.
If $Q$ is generic then
the Delaunay subdivision is a triangulation
and the Voronoi polytope (\ref{eq:voronoipolytope}) is simple.
It is dual to the link of the origin in the simplicial complex ${\rm Del}(Q)$.

The structures above represent principally polarized abelian varieties
in tropical geometry. A tropical abelian variety is the torus
$\mathbb{R}^g/\mathbb{Z}^g$ together with a quadratic form $Q \in \mathfrak{H}_g^{\rm trop}$.
The {\em tropical theta divisor} is given by the  codimension one cells in the 
induced Voronoi decomposition of $\mathbb{R}^g/\mathbb{Z}^g$.
See \cite[\S 5]{BBC} for an introduction with many pictures and many references.

We now fix an arbitrary Delaunay subdivision $D$ of $\mathbb{R}^g$.
Its {\em secondary cone} is defined as
\begin{align} \label{def-sec-cone}
\sigma_D \,\,=\,\, \bigl\{ \,Q\in \mathfrak{H}_g^{\rm trop}
\,\,|\,\, \Del(Q) = D \bigr\}\,.
\end{align}
This is a relatively open convex polyhedral cone. It consists of
positive definite matrices $Q$ whose Voronoi polytopes (\ref{eq:voronoipolytope})
have the same normal fan. The group ${\rm GL}_g(\mathbb{Z})$ acts on the
set of secondary cones. In his classical reduction theory for quadratic forms, 
Voronoi \cite{voronoi} proved that the cones $\sigma_D$ form a polyhedral fan,
now known as the {\em second Voronoi decomposition} of $\mathfrak{H}_g^{\rm trop}$,
and that there are only finitely many secondary cones $\sigma_D$ up to the action
of ${\rm GL}_g(\mathbb{Z})$. The following summarizes  characteristic features for matrices
in the Schottky locus $\mathfrak{J}_g^{\rm trop}$.

\begin{proposition} \label{prop:features}
Fix a graph $G$ with metric $D$,
homology basis $B$, and Riemann matrix  $Q = BDB^t$.
The Voronoi polytope (\ref{eq:voronoipolytope}) is affinely isomorphic to the
zonotope $\sum_{i=1}^m [-b_i,b_i]$. 
The secondary cone $\sigma_{{\rm Del}(Q)}$ is spanned
 by the rank one matrices $b_i b_i^t$: it equals $\sigma_{G,B}$ in
(\ref{eqn-cone}).
\end{proposition}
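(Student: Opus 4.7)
The plan is to exploit the explicit decomposition $Q = \sum_i l(e_i) b_i b_i^t$ to reduce both claims to properties of the cographic lattice of $G$. For the first sentence, I would observe that the map $\iota \colon \RR^g \hookrightarrow \RR^m$ defined by $p \mapsto \bigl(\sqrt{l(e_i)}\, b_i^t p\bigr)_{i=1}^m$ is an isometric embedding when $\RR^g$ carries the form $Q$ and $\RR^m$ carries the standard Euclidean inner product, since $\|\iota(p)\|^2 = \sum_i l(e_i)(b_i^t p)^2 = p^t Q p$. Hence the Voronoi polytope of $(\RR^g, Q, \ZZ^g)$ is carried by $\iota$ to the Voronoi polytope, in the $g$-dimensional subspace $\iota(\RR^g) \subset \RR^m$, of the cographic lattice $\iota(\ZZ^g)$.

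The heart of the proof is the classical fact that this Voronoi polytope is a zonotope whose Minkowski summands are exactly the edge vectors. I would verify this by identifying the \emph{relevant vectors} of the lattice, i.e.\ the primitive lattice vectors cutting out facets. Step (i): each $\pm b_i$ is relevant, since the inequality $2 p^t Q b_i \leq b_i^t Q b_i$ is tight on a full facet of (\ref{eq:voronoipolytope}). Step (ii): these are the only relevant vectors; this uses the totally unimodular structure of the cycle-incidence matrix $B$, equivalently that every cocircuit of $G$ has at least two elements, forcing the Voronoi inequality for any other lattice vector to be implied by those from the $\pm b_i$. Given (i) and (ii), the Voronoi polytope is the zonotope $\tfrac{1}{2}\sum_i l(e_i) [-b_i, b_i]$ (up to translation). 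Since any two zonotopes of the form $\sum_i c_i [-b_i, b_i]$ with $c_i > 0$ are affinely isomorphic via a diagonal rescaling of the generating segments, we obtain the claimed isomorphism with $\sum_i [-b_i, b_i]$.

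For the second sentence, observe that $\sigma_{G,B}$ consists exactly of forms $Q' = \sum_i t_i b_i b_i^t$ with $t_i > 0$, i.e.\ of Riemann matrices for the graph $G$ with edge lengths $t_i$. By the first sentence, the Voronoi polytope of each such $Q'$ is a zonotope of the same combinatorial type as that of $Q$, so $\Del(Q') = \Del(Q)$ and $\sigma_{G,B} \subseteq \sigma_{\Del(Q)}$. For the reverse inclusion I would invoke Voronoi's reduction theory \cite{voronoi}: the closure of $\sigma_{\Del(Q)}$ is a rational polyhedral cone whose extreme rays are the rank-one forms $u u^t$ with $u$ a primitive relevant vector of the associated lattices. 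Step (ii) already identified these as the $\pm b_i$, so the two cones have the same generators and must coincide.

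The main obstacle is step (ii): proving that \emph{only} the edge vectors $\pm b_i$ are relevant. This is where the graph-theoretic special structure enters essentially, and it is precisely what fails for generic $Q \in \mathfrak{H}_g^{\rm trop}$ outside the cographic locus; I would expect to lean on \cite{voronoi, sikiric} for a clean formulation of this combinatorial statement about cycle matroids.
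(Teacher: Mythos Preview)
Your argument has a concrete error at its foundation: the vectors $\pm b_i$ are \emph{not} the relevant vectors of the form $Q$. Take the theta graph with cycle basis giving $b_1=(1,0)$, $b_2=(0,1)$, $b_3=(-1,-1)$ and unit edge lengths, so $Q=\begin{psmallmatrix}2&1\\1&2\end{psmallmatrix}$. The relevant (facet-defining) lattice vectors are $\pm(1,0),\pm(0,1),\pm(1,-1)$; the vector $b_3=(-1,-1)$ has $Q$-norm $6$ and is \emph{not} relevant, since $(1,-1)$ is strictly shorter in the coset $b_3+2\ZZ^2$. So step~(i) fails, and with it the conclusion that $V_Q=\tfrac12\sum_i l(e_i)[-b_i,b_i]$. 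In this example the Voronoi hexagon has edge directions proportional to $Q^{-1}b_i$, not to $b_i$; equivalently $Q\cdot V_Q$, not $V_Q$ itself, is the zonotope generated by the $b_i$. This is precisely what the paper records when it says ``the affine isomorphism is given by the invertible matrix $Q$.''

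Two further steps inherit the same confusion. Your assertion that the extreme rays of $\overline{\sigma_{\Del(Q)}}$ are $uu^t$ for primitive relevant $u$ is false: in the example above this would give the cone through $(1,-1)(1,-1)^t=\begin{psmallmatrix}1&-1\\-1&1\end{psmallmatrix}$, whereas the actual secondary cone is spanned by $b_ib_i^t$ and contains $\begin{psmallmatrix}1&1\\1&1\end{psmallmatrix}$ instead; the two cones are genuinely different (they correspond to the two Delaunay triangulations of the unit square). And the claim that $\sum_i c_i[-b_i,b_i]$ and $\sum_i[-b_i,b_i]$ are affinely isomorphic ``via a diagonal rescaling of the generating segments'' is not an argument: rescaling the $m>g$ segments individually is not induced by any linear map of $\RR^g$. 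What actually makes the proposition work is the specific linear map $Q$; your outline never produces it.
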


\begin{proof}
This can be extracted from Vallentin's thesis \cite{frank}.
The affine isomorphism is given by the invertible matrix
$Q$, as explained in item iii) of \cite[\S 3.3.1]{frank}.
The Voronoi polytope being the zonotope 
 $\sum_{i=1}^m [-b_i,b_i]$ follows from
the discussion on cographic lattices in \cite[\S 3.5]{frank}.
The result for the secondary cone is derived from \cite[\S 2.6]{frank}.
See \cite[\S 4]{frank} for many examples.
\end{proof}

We now fix $g=4$. Vallentin \cite[\S 4.4.6]{frank} lists  all $52$ combinatorial types of Delaunay subdivisions 
of $\mathbb{Z}^4$. His table contains the f-vectors of all $52$ Voronoi polytopes. Precisely
$16$ of these types are cographic, and these comprise the Schottky locus $\mathfrak{J}_4^{\rm trop}$.
These are described in rows 3 to 18 of the table in \cite[\S 4.4.6]{frank}. We reproduce 
the relevant data in Table \ref{table-trop-schottky}. The following key lemma is found by
inspecting Vallentin's list of f-vectors.

\begin{lemma} The f-vectors of the $16$ Voronoi polytopes representing the Schottky locus
$\mathfrak{J}_4^{\rm trop}$ are distinct from the f-vectors of the other $36$ Voronoi polytopes,
corresponding to $\mathfrak{H}_4^{\rm trop} \backslash \mathfrak{J}_4^{\rm trop}$.
\end{lemma}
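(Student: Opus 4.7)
The statement is a finite verification, so the plan is essentially to consolidate Vallentin's data \cite[\S 4.4.6]{frank} and perform a pairwise comparison of f-vectors. Since the enumeration of the 52 combinatorial types of Delaunay subdivisions of $\mathbb{Z}^4$ is a completed (computer-assisted) classical result, the proof does not need to reprove it; it only needs to use it.

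The first step is to separate the 52 types into the cographic ones and the non-cographic ones. By Proposition \ref{prop:features}, a secondary cone $\sigma_D$ lies in the cographic locus $\mathfrak{J}_4^{\rm trop}$ if and only if the Voronoi polytope of any $Q \in \sigma_D$ is affinely isomorphic to a zonotope $\sum_{i=1}^m [-b_i, b_i]$, where the $b_i$ are the columns of a cycle-basis matrix $B$ of some genus-$4$ graph. Vallentin already performed this identification: rows 3 through 18 of his table are precisely the cographic types. Extracting these 16 entries and comparing them against the remaining 36 is exactly the content we need; this partition will be reproduced in Table \ref{table-trop-schottky}.

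The second step is the comparison itself. Each f-vector for a four-dimensional Voronoi polytope is a tuple $(f_0, f_1, f_2, f_3)$, and the lemma reduces to the claim that the two multisets of tuples obtained in the previous step are disjoint. This can be verified by sorting the two lists lexicographically and scanning through them once; the check is short enough to carry out by hand, and it is immediate from a glance at Table \ref{table-trop-schottky}.

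The only real obstacle is bookkeeping: one must guard against transcription errors in copying Vallentin's f-vectors, and make sure that the labeling of rows 3 to 18 as cographic is correctly propagated. Once these data are set down correctly, no genuine mathematical difficulty remains, and the lemma follows by inspection.
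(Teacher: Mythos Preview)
Your proposal is correct and matches the paper's own approach: the paper states that the lemma ``is found by inspecting Vallentin's list of f-vectors'' and provides no further argument, so your plan to extract the two sublists from \cite[\S 4.4.6]{frank} and compare them is exactly what is intended. The only addition you make is spelling out the bookkeeping, which the paper leaves implicit.
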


This lemma gives rise to the following method for the tropical Schottky decision  problem.

\begin{algorithm}[Tropical Schottky Decision]   \label{alg:TSD}
\underbar{Input}: $Q \in \mathfrak{H}_4^{\rm trop}\!$.
\underbar{Output}: Yes, if $\,Q \in \mathfrak{J}_4^{\rm trop}\!$. \\
1. Compute the Voronoi polytope in (\ref{eq:voronoipolytope}) for the quadratic form $Q$. \\
2. Determine the f-vector $(f_0,f_1,f_2,f_3)$ of this $4$-dimensional polytope. \\
3. Check whether this f-vector appears in our Table \ref{table-trop-schottky}.
Output ``Yes'' if this holds.
\end{algorithm}

\begin{longtable}{|c|c|c|c|c|c|c|}
  \hline
Graph $G$ & Riemann matrix $Q_\Gamma$ & $f_0$ & $f_1$ &
$f_2$ & $f_3$ & Dimension of $\sigma_D$ \\
\hline \rule{0pt}{1.4\normalbaselineskip}
\begin{tikzpicture}[baseline={([yshift=-.5ex]current bounding box.center)}] \filldraw (0,0) circle (1pt) (0,1) circle (1pt)
  (0.5,0.5) circle (1pt) (1,0.5) circle (1pt) (1.5,0) circle (1pt) (1.5,1) circle (1pt); \draw (0,0)--(0,1)--(0.5,0.5)--(0,0) (0,1)--(1.5,1)--(1.5,0)--(0,0) (0.5,0.5)--(1,0.5) (1.5,0)--(1,0.5)--(1.5,1);
\end{tikzpicture}&
$\begin{psmallmatrix}3&1&-1&0\\1&4&1&1\\-1&1&4&-1\\0&1&-1&3\end{psmallmatrix}$
  & 96 & 198 & 130 & 28 & 9 
\\[4mm] \begin{tikzpicture}[baseline={([yshift=-.5ex]current bounding box.center)}]
  \filldraw (0,0) circle (1pt) (0,1) circle (1pt) (1,0) circle (1pt) (2,0) circle (1pt)
  (1,0) circle (1pt) (1,1) circle (1pt) (2,1) circle (1pt);\draw
  (0,0)--(0,1) (0,0)--(1,1) (0,0)--(2,1) (1,0)--(0,1) (1,0)--(1,1) (1,0)--(2,1) (2,0)--(0,1) (2,0)--(1,1) (2,0)--(2,1);
\end{tikzpicture}& $\begin{psmallmatrix}4&2&-2&-1\\2&4&-1&-2\\-2&-1&4&2\\-1&-2&2&4\end{psmallmatrix}$
  & 102 & 216 & 144 & 30 & 9 \\[4mm] \hline \rule{0pt}{1.4\normalbaselineskip}
\begin{tikzpicture}[baseline={([yshift=-.5ex]current bounding box.center)}]
\filldraw (0,0) circle (1pt) (0,1) circle (1pt) (1,0) circle (1pt)
(1,1) circle (1pt) (0.5,0.5) circle (1pt);
\draw (0,0)--(1,0) (1,1)--(0,1)--(0,0) (0,0)--(0.5,0.5)--(1,1)
(0.5,0.5)--(0,1) (1,0) to [bend left] (1,1) (1,0) to [bend right] (1,1);
\end{tikzpicture} 
& $\begin{psmallmatrix}2&0&-1&0\\0&3&1&1\\-1&1&4&-1\\0&1&-1&3\end{psmallmatrix}$ & 72 & 150 & 102 & 24 & 8 \\[4mm]
\begin{tikzpicture}[baseline={([yshift=-.5ex]current bounding box.center)}]
\filldraw (0,0) circle (1pt) (0,1) circle (1pt) (1,0) circle (1pt)
(1,1) circle (1pt) (0.5,0.5) circle (1pt);
\draw (0,0)--(1,0)--(1,1)--(0,1)--(0,0) (0,0)--(0.5,0.5)--(1,1) (1,0)--(0.5,0.5)--(0,1);
\end{tikzpicture} 
& $\begin{psmallmatrix}3&2&1&-1\\2&4&2&-1\\1&2&4&1\\-1&-1&1&3\end{psmallmatrix}$ & 78 & 168 & 116 & 26 & 8 \\[4mm] \hline\rule{0pt}{1.4\normalbaselineskip}
\begin{tikzpicture}[baseline={([yshift=-.5ex]current bounding box.center)}]
\filldraw (0,0) circle (1pt) (0,1) circle (1pt) (1,0) circle (1pt)
(1,1) circle (1pt);
\draw (0,0)--(1,0) (0,0)--(0,1) (0,0)--(1,1) (1,0) to [bend left]
(1,1) (1,0) to [bend right] (1,1) (0,1)--(1,1) (0,1)--(1,0);
\end{tikzpicture}
& $\begin{psmallmatrix}3&1&-1&-1\\1&3&1&1\\-1&1&3&2\\-1&1&2&3\end{psmallmatrix}$ & 60 & 134 & 98 & 24 & 7 \\[4mm]
 \begin{tikzpicture}[baseline={([yshift=-.5ex]current bounding box.center)}]
\filldraw (0,0) circle (1pt) (0,1) circle (1pt) (1,0) circle (1pt)
(1,1) circle (1pt);
\draw (0,0)--(1,0) (1,0) to [bend left] (1,1) (1,0) to [bend right]
(1,1) (0,1) to [bend left] (1,1) (0,1) to [bend right] (1,1)
(0,0) to [bend left] (0,1) (0,0) to [bend right] (0,1); 
\end{tikzpicture} 
& $\begin{psmallmatrix}2&0&-1&-1\\0&2&-1&-1\\-1&-1&4&3\\-1&-1&3&4\end{psmallmatrix}$ &
  54 & 116 & 84 & 22 & 7 \\[4mm]
\begin{tikzpicture}[baseline={([yshift=-.5ex]current bounding box.center)}]
\filldraw (0,0) circle (1pt) (0,1) circle (1pt) (1,0) circle (1pt)
(1,1) circle (1pt);
\draw (0,0)--(1,0) (0,0)--(0,1) (0,0)--(1,1) (1,0) to [bend left]
(1,1) (1,0) to [bend right] (1,1) (0,1) to [bend left]
(1,1) (0,1) to [bend right] (1,1);
\end{tikzpicture} 
& $\begin{psmallmatrix}2&0&-1&0\\0&2&0&-1\\-1&0&3&1\\0&-1&1&3\end{psmallmatrix}$
 & 54 & 114 & 80 & 20 & 7 \\[4mm]
\begin{tikzpicture}[baseline={([yshift=1ex]current bounding box.center)}]
\filldraw (0,0) circle (1pt) (0,0.8) circle (1pt) (0.8,0) circle (1pt)
(0.8,0.8) circle (1pt);
\draw (0,0)--(0.8,0) (0,0)--(0,0.8) (0,0)--(0.8,0.8) (0.8,0) (0.8,0)--(0.8,0.8)
(0,0.8)--(0.8,0.8) (0,0.8)--(0.8,0)
(0.8,0) to [in=30,out=-60,distance=8mm] (0.8,0);
\end{tikzpicture} & $\begin{psmallmatrix}3&1&-1&0\\1&3&1&0\\-1&1&3&0\\0&0&0&1\end{psmallmatrix}$ & 48 & 96 & 64 & 16 & 7 \\[-3mm] \hline \rule{0pt}{1.4\normalbaselineskip}
\begin{tikzpicture}[baseline={([yshift=-.5ex]current bounding box.center)}]
\filldraw (0,1) circle (1pt) (1,1) circle (1pt) (0.5,0) circle (1pt);
\draw (0.5,0) to [bend left] (0,1) (0.5,0) to [bend right] (0,1)
(0.5,0) to [bend left] (1,1) (0.5,0) to [bend right] (1,1) 
(0,1) to [bend left] (1,1) (0,1) to [bend right] (1,1);
\end{tikzpicture} 
& $\begin{psmallmatrix}2&0&-1&-1\\0&2&1&1\\-1&1&3&2\\-1&1&2&3\end{psmallmatrix}$ & 46 & 108 & 84 & 22 & 6 \\[4mm]
\begin{tikzpicture}[baseline={([yshift=-.5ex]current bounding box.center)}]
\filldraw (0,1) circle (1pt) (1,1) circle (1pt) (0.5,0) circle (1pt);
\draw (0.5,0) to [bend left] (0,1) (0.5,0) to [bend right] (0,1)
(0.5,0) to [bend left] (1,1) (0.5,0) to [bend right] (1,1) 
(0,1)--(1,1) (0.5,0)--(1,1);
\end{tikzpicture}
& $\begin{psmallmatrix}2&-1&-1&-1\\-1&3&2&2\\-1&2&3&2\\-1&2&2&3\end{psmallmatrix}$ & 42 & 94 & 72 & 20 & 6 \\[4mm]
\begin{tikzpicture}[baseline={([yshift=-.5ex]current bounding box.center)}]
\filldraw (0,0.6) circle (1pt) (0.6,0.6) circle (1pt) (0.3,0) circle (1pt);
\draw (0.3,0) to [bend left] (0,0.6) (0.3,0) to [bend right] (0,0.6)
(0.3,0) to [bend left] (0.6,0.6) (0.3,0) to [bend right] (0.6,0.6) 
(0,0.6)--(0.6,0.6) (0.3,0) to [in=-30,out=-150, distance=8mm] (0.3,0);
\end{tikzpicture}
& $\begin{psmallmatrix}2&1&1&0\\1&3&2&0\\1&2&3&0\\0&0&0&1\end{psmallmatrix}$ & 36 & 74 & 52 & 14 & 6 \\[4mm]
\begin{tikzpicture}[baseline={([yshift=-.5ex]current bounding box.center)}]
\filldraw (0,0) circle (1pt) (0.8,0) circle (1pt) (1.6,0) circle (1pt);
\draw (0,0) to [bend left=40] (0.8,0) (0,0) to [bend right=40] (0.8,0) (0,0)--(0.8,0);
\draw (0.8,0) to [bend left=40] (1.6,0) (0.8,0) to [bend right=40] (1.6,0) (0.8,0)--(1.6,0);
\end{tikzpicture}
& $\begin{psmallmatrix}2&1&0&0\\1&2&0&0\\0&0&2&1\\0&0&1&2\end{psmallmatrix}$ & 36 & 72 & 48 & 12 & 6 \\[4mm] \hline \rule{0pt}{1.4\normalbaselineskip}
\begin{tikzpicture}[baseline={([yshift=-.5ex]current bounding box.center)}]
\filldraw (0,0) circle (1pt) (1,0) circle (1pt);
\draw (0,0) to [bend left=20] (1,0) (0,0) to [bend left=60] (1,0)
(0,0) to [bend right=20] (1,0) (0,0) to [bend right=60] (1,0) (0,0)--(1,0);
\end{tikzpicture} 
& $\begin{psmallmatrix}2&1&1&1\\1&2&1&1\\1&1&2&1\\1&1&1&2\end{psmallmatrix}$ & 30 & 70 & 60 & 20 & 5 \\[3mm]
\begin{tikzpicture}[baseline={([yshift=-.5ex]current bounding box.center)}]
\filldraw (0,0) circle (1pt) (1,0) circle (1pt);
\draw (0,0) to [bend left=20] (1,0) (0,0) to [bend left=60] (1,0)
(0,0) to [bend right=20] (1,0) (0,0) to [bend right=60] (1,0);
\draw (1,0) to[in=50,out=-50, distance=8mm,looseness=50] (1,0);
\end{tikzpicture} 
& $\begin{psmallmatrix}2&1&1&0\\1&2&1&0\\1&1&2&0\\0&0&0&1\end{psmallmatrix}$ & 28 & 62 & 48 & 14 & 5 \\[-2mm]
\begin{tikzpicture}[baseline={([yshift=-.5ex]current bounding box.center)}]
\filldraw (0.2,0) circle (1pt) (1,0) circle (1pt);
\draw (0.2,0) to [bend left=40] (1,0) (0.2,0) to [bend right=40] (1,0) (0.2,0)--(1,0);
\draw (1,0) to[in=90,out=0, distance=8mm] (1,0) (1,0) to[in=0,out=-90, distance=8mm] (1,0);
\end{tikzpicture} 
 & $\begin{psmallmatrix}2&1&0&0\\1&2&0&0\\0&0&1&0\\0&0&0&1\end{psmallmatrix}$   & 24 & 48 & 34 & 10 & 5 \\[-2mm] \hline 
\begin{tikzpicture}[baseline={([yshift=-.5ex]current bounding box.center)}]
\filldraw (0,0) circle (1pt);
\draw (0,0) to[in=0, out=90,distance=8mm] (0,0) (0,0) to[in=90, out=180,distance=8mm](0,0) (0,0) to[in=180, out=270,distance=8mm] (0,0) (0,0) to[in=270, out=0,distance=8mm] (0,0);
\end{tikzpicture} 
& $\begin{psmallmatrix}1&0&0&0\\0&1&0&0\\0&0&1&0\\0&0&0&1\end{psmallmatrix}$ & 16 & 32 & 24 & 8 & 4 \\ \hline 
\caption{The tropical Schottky locus for $g=4$}
\label{table-trop-schottky}
\end{longtable}

We implemented Algorithm \ref{alg:TSD} using existing software 
for polyhedral geometry, namely the \texttt{GAP} package \texttt{polyhedral} 
due to Dutour Sikiri\'c \cite{sikiric, sgsw}, as well as Joswig's 
 {\tt polymake} \cite{polymake}.

The first column of Table \ref{table-trop-schottky} shows all relevant graphs $G$ of genus $4$.
The second column gives a representative Riemann matrix.
Here all edges have length $1$ and a cycle basis $B$ was chosen.
Using (\ref{eqn-cone}), we also  precomputed 
the secondary cones  $\sigma_{G,B}$ for the $16$
 representatives.

\begin{example}\label{ex-decision} \rm
 Using the \texttt{GAP} package \texttt{polyhedral} \cite{sikiric} we compute the Voronoi polytope of
$$ \begin{small}
 Q\,\,=\,\,\begin{pmatrix}14&-9&11&0\\-9&11&-2&1\\11&-2&21&11\\0&1&11&14\end{pmatrix}.
 \end{small}
 $$
Its $f$-vector is $(62,142,104,24)$. This does not appear in
Table \ref{table-trop-schottky}.
Hence $Q$ is not in  $\mathfrak{J}_4^{\rm trop}$.
\end{example}

We now address the Schottky Recovery Problem.
The input is a matrix $Q \in \mathfrak{J}_4^{\rm trop}$.
From Algorithm \ref{alg:TSD} we know the f-vector
of the Voronoi polytope. Using Table \ref{table-trop-schottky},
this uniquely identifies the graph $G$.
Note that our graphs $G$ are dual to those in \cite[\S 4.4.4]{frank}.
From our precomputed list, we also know the secondary cone $\sigma_{G,B}$
for some choice of basis~$B$.

\begin{algorithm}[Tropical Schottky Recovery]   \label{alg:TSR}
\underbar{Input}: $Q \in \mathfrak{J}_4^{\rm trop}$. \\
\underbar{Output}: A metric graph $\Gamma $ whose Riemann matrix $Q_\Gamma$ equals $Q$. \\
1. Identify the underlying graph $G$  from Table \ref{table-trop-schottky}.
Retrieve the basis $B$ and the cone $\sigma_{G,B}$. \\
2. Let $D = {\rm Del}(Q)$ and compute the secondary cone $\sigma_D$ 
as in (\ref{def-sec-cone}). \\
3. The cones $\sigma_D$ and $\sigma_{G,B}$ are related by a linear transformation
$X \in {\rm GL}_4(\mathbb{Z})$.~Compute $X$. \\
4. The matrix $ X^t Q X$ lies in $\sigma_{G,B}$. Compute
$\ell_1,\ldots,\ell_m$ such that
$X^t Q X = \sum_{i=1}^m \ell_i b_i b_i^t$. \\
5. Output the graph $G$ with length $\ell_i$ for its $i$-th edge,
corresponding to the column $b_i$ of $B$.
\end{algorithm}

We implemented this algorithm as follows.
Step 2  can be done using \texttt{polyhedral} \cite{sikiric}.
This code computes the secondary cone  $\sigma_D$ containing
a given positive definite matrix $Q$.
The matrix $X \in {\rm GL}_4(\mathbb{Z}) $ in Step 3 is also
found by \texttt{polyhedral}, but with external calls to the package \texttt{isom} 
due to Plesken and Souvignier \cite{ps95}.
We refer to \cite[\S 4]{sgsw} for details.
For Step 4 we note that the rank $1$ matrices $b_1 b_1^t, \ldots, b_m b_m^t$ are linearly
independent \cite[\S 4.4.4]{frank}. Indeed, the two $9$-dimensional secondary cones $\sigma_{G,B}$ at the top of 
Table \ref{table-trop-schottky} are simplicial, and so are their faces.
Hence the multipliers $\ell_1,\ldots,\ell_m$ found in Step 4 are unique and positive.
These $\ell_i$ must agree with the desired edge lengths $l(e_i)$, by the formula
for $Q= Q_\Gamma$ in (\ref{eq:tropicalriemann2}).

\begin{example}\label{ex-recov} \rm 
Consider the Schottky Recovery Problem for the matrix
\begin{equation}
\label{eq:ex36matrix}
\begin{small}
Q \,\,=\,\,\begin{pmatrix}
17 & 5 & 3 & 5\\
5 & 19 & 7 & 11\\
3 & 7 & 23 & 16 \\
5 & 11 & 16 & 29
\end{pmatrix}.
 \end{small}
\end{equation}
Using \texttt{polyhedral}, we find that the f-vector of its
Voronoi polytope is $(96,198,130,28)$. This matches the first row in
Table \ref{table-trop-schottky}. Hence $Q \in \mathfrak{J}_4^{\rm trop}$,
and $G$ is the triangular prism.
Using \texttt{polyhedral} and \texttt{isom}, we find a matrix that
 maps $Q$ into our preprocessed secondary cone:
$$
\begin{small}
X = \begin{pmatrix}
  0 & 0 & 0 & 1 \\
  1 & 0 & 0 & 0 \\
  0 & 1 & 1 & 0 \\
  -1 & -1 & 0 & 0
\end{pmatrix}\,\in\, {\rm GL}_4(\mathbb{Z}) \qquad \hbox{gives} \qquad
Q' = X^tQX = \begin{pmatrix}
  26 & 9 & -9 & 0 \\
  9 & 20 & 7 & -2 \\
  -9 & 7 & 23 & 3 \\
  0 & -2 & 3 & 17
\end{pmatrix}\, \in\, \sigma_{G,B}.
\end{small}
$$
This $Q'$ is the Riemann matrix of the metric graph in Figure
\ref{fig-recover-trop-schottky}, with basis cycles
 $e_2+e_6-e_3$, $-e_1-e_4+e_7+e_2$, $-e_1-e_5+e_8+e_3$, and
$e_4+e_9-e_5$. These are the rows of the $4 \times 9$-matrix $B$.
In Step 4 of Algorithm \ref{alg:TSR} we compute
$D = {\rm diag}(\ell_1,\ldots,\ell_9) = {\rm diag}(
7,9,9,2,3,8,2,4,12)$. 
 In Step 5 we output the metric graph in 
Figure \ref{fig-recover-trop-schottky}.
Its Riemann matrix equals $Q = BDB^t$.

\begin{figure}[h]\label{fig-graph1}
\begin{center}
\begin{tikzpicture}[scale=1.5]
\usetikzlibrary{decorations.markings}  
\begin{scope}[thick, decoration={markings, mark=at position 0.5 with {\arrow{latex}}}]
\filldraw (0,0) circle (1pt) (0,2) circle (1pt)
  (1,1) circle (1pt) (2,1) circle (1pt) (3,0) circle (1pt)
(3,2) circle (1pt);
\draw[postaction={decorate}] (1,1)--node[above,red]{7}node[below]{$e_1$}(2,1);
\draw[postaction={decorate}] (1,1)--node[above,red]{9}node[below]{$e_2$}(0,2);
\draw[postaction={decorate}] (1,1)--node[above,red]{9}node[below]{$e_3$}(0,0);
\draw[postaction={decorate}] (2,1)--node[above,red]{2}node[below]{ $e_4$}(3,2);
\draw[postaction={decorate}] (2,1)--node[above,red]{3}node[below]{$e_5$}(3,0);
\draw[postaction={decorate}] (0,2)--node[right,red]{8}node[left]{$e_6$}(0,0);
\draw[postaction={decorate}] (0,2)--node[above,red]{2}node[below]{$e_7$}(3,2);
\draw[postaction={decorate}] (0,0)--node[above,red]{4}node[below]{$e_8$}(3,0);
\draw[postaction={decorate}] (3,2)--node[right,red]{12}node[left]{$e_9$}(3,0);
\end{scope}
\end{tikzpicture}
\caption{Metric graph with edge lengths in red. Its
Riemann matrix matches (\ref{eq:ex36matrix}).
}
\label{fig-recover-trop-schottky}
\end{center}
\end{figure}
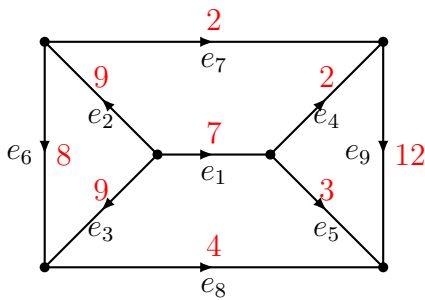
\end{example}

It is instructive to compare Algorithms \ref{alg:TSD} and
\ref{alg:TSR} with Section~\ref {sec:classic}.
Our classical solution is not just the abstract
Riemann surface but it consists of a canonical embedding into $\PP^3$.

Canonical embeddings also exist for metric graphs $\Gamma$, as
explained in \cite[\S 7]{HMY}. However, even computing the
ambient space $|K|$, that plays the role of $\PP^3$, is non-trivial
in that setting. For $g=4$ this is solved in \cite{BoLin}.
An alternative approach is to construct a 
classical curve over a non-archimedean field that tropicalizes to $\Gamma$.
See \cite[\S 7.3]{BBC} for first steps in that direction.

Example \ref{exp:shimuracurve} explored the Schottky locus in a
two-parameter family of Riemann matrices.
In the tropical setting, it is natural to intersect 
$ \mathfrak{H}_g^{\rm trop}$ with an
affine-linear space $L$ of symmetric matrices.
The intersection  $ \mathfrak{H}_g^{\rm trop} \cap L$
is a {\em spectrahedron}.
By the {\em Schottky locus of a spectrahedron}
we mean $ \mathfrak{J}_g^{\rm trop} \cap L$.
This is an infinite periodic polyhedral complex
inside the spectrahedron. For
quartic spectrahedra \cite{ORSV}, when $g=4$,
this locus has codimension one.

\begin{example}[The Schottky locus of a quartic spectrahedron] \rm
We consider the matrix
$$
\begin{small}
\! Q = 
 \begin{bmatrix}1589 - 2922 s + 960 t &   789 - 1322 s & -820 + 660 s - 1350 t & \!\! -820 + 3260 s +    2550 t \\ 789 - 1322 s &   1589 - 2922 s - 960 t \! &\!\!\! -820 + 3260 s - 2550 t \!& -820 + 660 s +  1350 t \\ -820 + 660 s - 1350 t & \!-820 + 3260 s - 2550 t \! &   1665 + 450 s + 3120 t & -25 - 2930 s \\ \!-820 + 3260 s +    2550 t & -820 + 660 s + 1350 t & -25 - 2930 s &   1665 + 450 s - 3120 t\end{bmatrix} \!.
 \end{small}
$$
Here $s$ and $t$ are parameters. This defines a plane $L$ in the
space of symmetric $4 \times 4$-matrices. The left diagram in  Figure \ref{fig:spec}
shows the hyperbolic curve $\{{\rm det}(Q)=0\}$.
The spectrahedron $ \mathfrak{H}_g^{\rm trop} \cap L$ is bounded by its inner oval.
The right diagram shows the second Voronoi decomposition.
The Schottky locus  $ \mathfrak{J}_g^{\rm trop} \cap L$ 
is a proper subgraph 
of its edge graph. 
It is shown in red. Note that the graph has infinitely many edges and regions.
\end{example}

\begin{remark} \rm
We described some computations in {\tt GAP} and in {\tt polymake}
that realize Algorithms~\ref{alg:TSD} and \ref{alg:TSR}.
The code for these implementations  is made available on our website~(\ref{eq:url}).
\end{remark}
\begin{figure}[h]
  \begin{center}
  \vspace{-0.1in} 
\includegraphics[width=7.5cm]{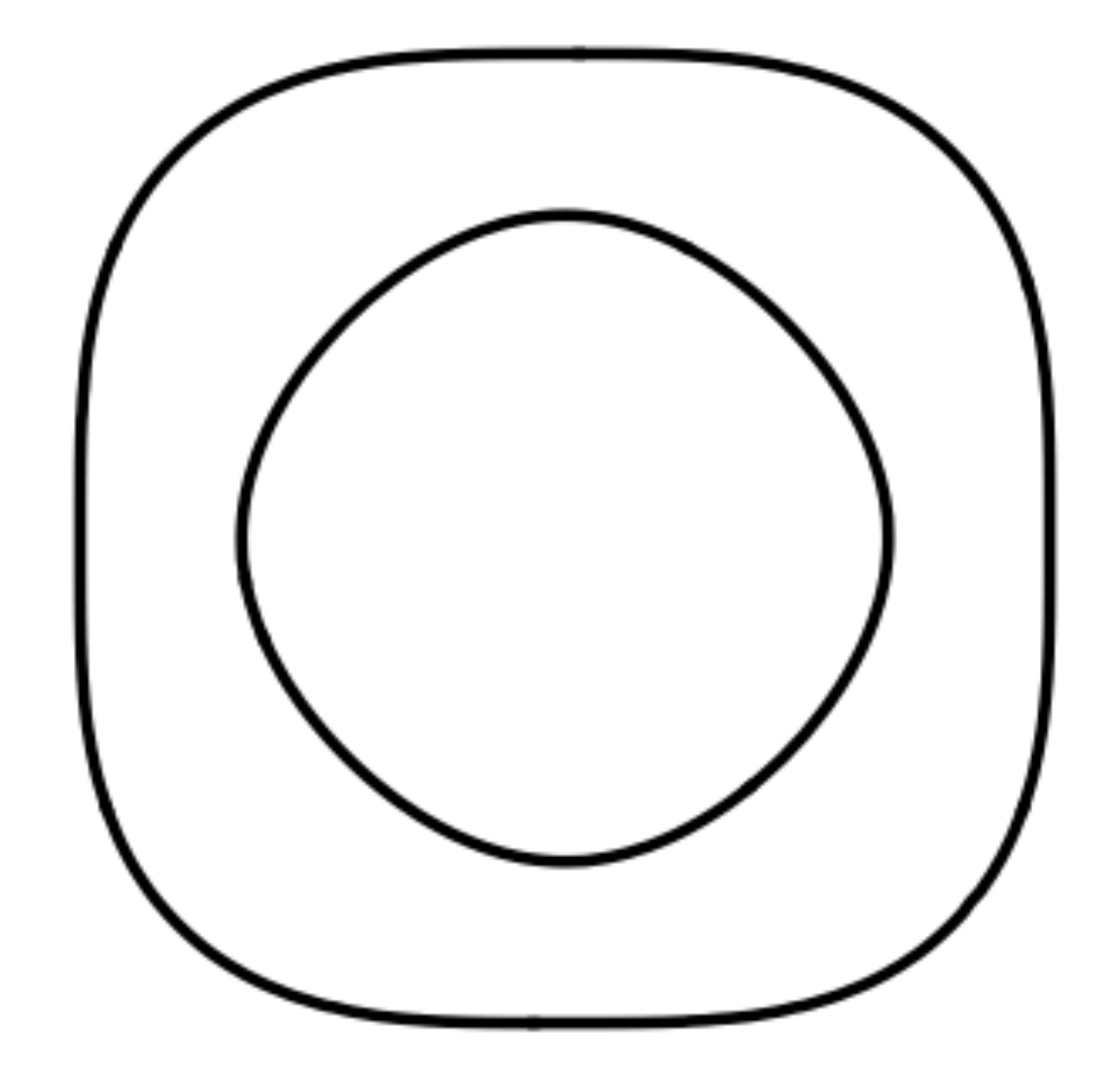}   \quad
 \includegraphics[width=7.8cm]{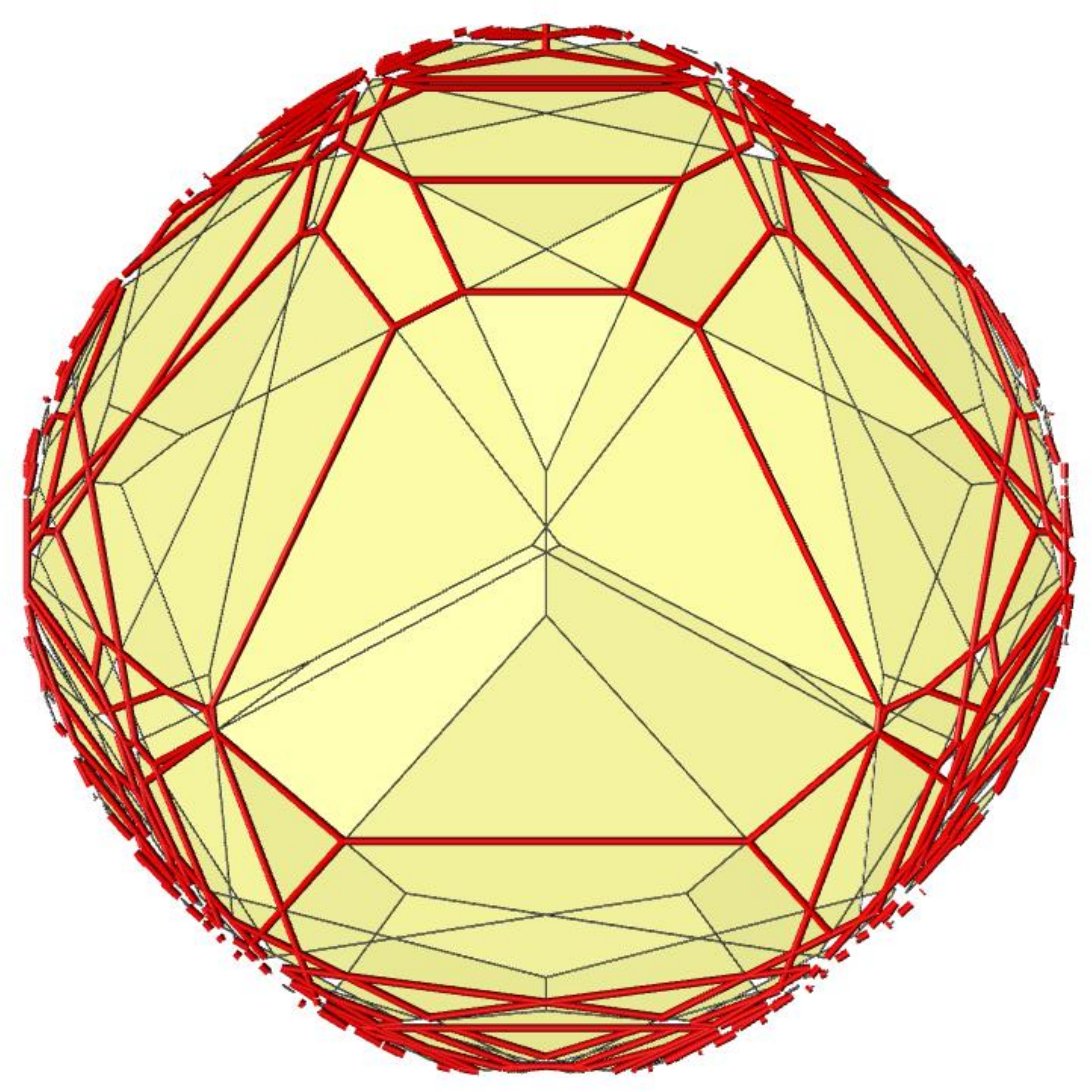}
\vspace{-0.3in}
\end{center}    \caption{\label{fig:spec}
A quartic spectrahedron (left) and its second Voronoi decomposition (right).
The Schottky locus of that spectrahedron consists of those edges that are highlighted in red.
}
 \end{figure}

\section{Tropical Meets Classical}
\label{sec:four}

In this section we present a second solution to the tropical Schottky problem.
It is new and different from the one in Section \ref{sec:tropical}, and it
links directly to  the classical solution in Section~\ref{sec:classic}.

Let $Q \in \mathfrak{H}_g^{\rm trop}$ be a positive definite matrix for arbitrary $g$. 
Mikhalkin and Zharkov \cite[\S 5.2]{mz} define the following analogue
to the Riemann theta function in the max-plus algebra:
\begin{equation}
\label{eq:troptheta}
\Theta(Q,x)\,\,:=\,\,\max_{\lambda\in\mathbb{Z}^g}\{\lambda^t Q x-\frac{1}{2} \lambda^t Q \lambda\}.
\end{equation}
This {\em tropical theta function} describes the asymptotic behavior of the 
classical Riemann theta function with Riemann matrix $t\cdot\tau$ when $t$ goes to infinity, as long as there are no cancellations.   This is made precise in Proposition \ref{lem:tropicallimit}. Here,
the real matrix $Q$ is the imaginary part of $\tau$. 

  Analogously, for $u\in\mathbb{Z}^g$, we define the \textit{tropical theta constant with characteristic} $u$ to be 
  \begin{equation}
  \label{eq:tropthetaconst}
 \Theta_{u}(Q)\,\,:=\,\,2\cdot\Theta(Q,\frac{u}{2})-\frac{1}{4} u^t Q u. 
\end{equation} 
  In the classical case, characteristics are vectors $m = (m',m'')$ in $\mathbb{Z}^{2g}$. But, only
  $u=m'$  contributes to the aforementioned asymptotics. 
  Note that $\Theta_u(Q)$ depends only on $u$ modulo~$2$. 

\begin{definition} \rm
  For any $v\in\mathbb{Z}^g$ consider the following signed sum of tropical theta constants:
\begin{equation}
\label{eq:vartheta}
 \vartheta_v(Q) \,\,\,:=\sum_{u\in(\mathbb{Z}/2\mathbb{Z})^g} \!\! (-1)^{u^t v}\cdot \Theta_u(Q) .
 \end{equation}
 The \textit{theta matroid} $M(Q)$ is the binary matroid represented by the collection of vectors 
 \begin{equation}
 \label{eq:magicmatroid}
\bigl\{v \in(\mathbb{Z}/2\mathbb{Z})^g:\, \vartheta_v(Q)\neq 0 \bigr\}.
 \end{equation}
\end{definition}

The tropical theta constants and the theta matroid are invariant under basis changes
$S\in\textnormal{GL}_g(\mathbb{Z})$. We have
$\, \vartheta_u(Q)=\vartheta_{S^{-1} u}(S^tQ S)\,$ for all 
$u\in\mathbb{Z}^g$, and therefore
$M(Q) = M(S^tQS)$.

Here is the promised new approach to the  Schottky problem.
If $Q$ lies in the tropical Schottky locus then
 $M(Q)$ is the desired cographic matroid
 and (\ref{eq:vartheta}) furnishes edge lengths.

\begin{theorem}\label{thm:recoverfromtheta}
 If $\,Q \in \mathfrak{J}_g^{\rm trop}$ then the matroid $M(Q)$ is cographic.
 In that graph, we assign the length
$\,2^{3-g} \cdot \vartheta_v(Q)\,$ to the edge labeled $v$.
The resulting metric graph has Riemann matrix~$Q$.
\end{theorem}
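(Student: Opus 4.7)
The plan is to make the tropical theta constants $\Theta_u(Q)$ explicit when $Q$ comes from a metric graph, and then to Fourier-transform over $(\mathbb{Z}/2\mathbb{Z})^g$. Completing the square in (\ref{eq:troptheta}) at $x = u/2$ and substituting $\mu = 2\lambda - u$ reduces the tropical max to a closest-vector problem:
\[ \Theta_u(Q) \,=\, -\tfrac{1}{4} \min_{\mu \in u + 2\mathbb{Z}^g} \mu^t Q \mu. \]

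By Proposition \ref{prop:features}, I can write $Q = BDB^t$ for a metric graph $\Gamma$ with edge lengths $\ell_1,\ldots,\ell_m$ and cycle lattice $L = B^t \mathbb{Z}^g \subset \mathbb{Z}^E$. Then $\mu^t Q \mu = \sum_i \ell_i (b_i^t \mu)^2$ is the $\ell$-weighted squared length of the integer cycle $B^t\mu \in L$. The central combinatorial claim is
\[ \min_{\mu \in u + 2\mathbb{Z}^g} \mu^t Q \mu \,=\, \sum_{i \,:\, b_i^t u \text{ odd}} \ell_i. \]
The lower bound is easy since $(B^t \mu)_i = b_i^t\mu$ has parity $b_i^t u$, so every odd coordinate contributes at least $\ell_i$. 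The upper bound is where I expect the real work: one must realize the mod-$2$ class $\overline{B^t u} \in H_1(\Gamma; \mathbb{Z}/2\mathbb{Z})$ by an integer cycle $c^\ast$ with entries in $\{0,\pm 1\}$ lying in the coset $B^t u + 2L$, not merely $B^t u + 2\mathbb{Z}^E$. For this I would use two ingredients: the lattice identity $L \cap 2\mathbb{Z}^E = 2L$, which follows from linearity of the boundary operator (since $L$ is its kernel), together with the standard decomposition of any class in $H_1(\Gamma; \mathbb{Z}/2\mathbb{Z})$ as an edge-disjoint union of simple cycles, each of which lifts to an oriented $\pm 1$-valued cycle.

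Writing $\bar b_i$ for $b_i$ reduced mod $2$ and substituting $\min_\mu = \tfrac{1}{2} \sum_i \ell_i(1 - (-1)^{\bar b_i^t u})$ into (\ref{eq:vartheta}), the orthogonality of characters on $(\mathbb{Z}/2\mathbb{Z})^g$ collapses everything to
\[ \vartheta_v(Q) \,=\, 2^{g-3} \sum_{i\,:\, \bar b_i = v} \ell_i \qquad (v \neq 0), \]
with an explicit negative value at $v = 0$ that renders $0$ a loop of $M(Q)$. Consequently the support of $\vartheta$ is $\{0\} \cup \{\bar b_1, \ldots, \bar b_m\}$; the loop carries no matroid content, while the remaining labels furnish a binary representation of the cographic matroid of $\Gamma$, proving the matroid claim. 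The prescribed length $2^{3-g}\vartheta_v(Q)$ then returns $\ell_i$ at the label $\bar b_i$ (summing parallel lengths when distinct edges collapse to the same mod-$2$ label, which leaves $\sum_i \ell_i b_i b_i^t$ unchanged), so the reconstructed metric graph has Riemann matrix exactly $Q$.
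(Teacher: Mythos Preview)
Your argument is correct and follows the same overall arc as the paper: compute $\Theta_u(Q)$ explicitly for $Q=BDB^t$, then invert the character sum to obtain $\vartheta_v(Q)=2^{g-3}\sum_{\bar b_i=v}\ell_i$. The one substantive difference is in the ``hard'' inequality for $\Theta_u(Q)$. The paper does not argue graph-theoretically; instead it invokes Ghouila--Houri's characterization of totally unimodular matrices to produce, for the index set $I=\{i:u_i\text{ odd}\}$, a sign vector $w\in\{0,\pm1\}^g$ supported on $I$ with $b_j^t w\in\{0,\pm1\}$ for all $j$, and then takes $\lambda_0=(w-u)/2$. Your route---Euler-decompose the mod-$2$ cycle $\overline{B^tu}$ into edge-disjoint simple cycles, lift each to a $\{0,\pm1\}$-valued integer cycle, and use $L\cap 2\mathbb{Z}^E=2L$ to land in the correct coset---is more elementary and self-contained, but it is specific to cographic matroids. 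The paper's Ghouila--Houri argument works verbatim for any totally unimodular $B$, which matters later in the paper when the same formula is applied on the full zonotopal locus (Theorem~\ref{thm:admissible}).

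Two further points where your treatment is actually cleaner than the paper's. First, you observe that $\vartheta_0(Q)<0$, so $0$ lies in the ground set of $M(Q)$ as a loop; the paper's counting of $|E_i|,|O_i|$ tacitly assumes $v\neq0$ and hence misses this. Second, you allow distinct edges with $\bar b_i=\bar b_j$ and note that their lengths simply add (indeed $b_ib_i^t=b_jb_j^t$ whenever $\{e_i,e_j\}$ is a $2$-cut). The paper instead asserts that suppressing $2$-valent vertices forces all pairs to be independent in $M^*(G)$, which is not quite right in general; your formulation sidesteps the issue.
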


This says, in particular, that $\vartheta_v(Q)$ is non-negative when $Q $ comes from a metric graph.

\begin{proof}
Since $Q \in \mathfrak{J}_g^{\rm trop}$, there exists a unimodular matrix 
$B = (b_1,\ldots,b_m) \in \{-1,0,+1\}^{g \times m}$
and a diagonal matrix $D = {\rm diag}(\ell_1,\ldots,\ell_m)$ such that $Q = B D B^t 
= \sum_{i=1}^m \ell_i b_i b_i^t$. We claim 
\begin{equation}
\label{eq:weclaim}
   \Theta_u (Q)\,\,=\,\,-\,\frac{1}{4}\,\cdot \! \sum_{b_i^t u \textnormal{ is odd}} \!\!\ \ell_i \qquad \quad
    \hbox{for all $\,u\in \ZZ^g$}.
    \end{equation}
 Here the $\ell_i$ are positive real numbers. First, we note that
 $$
  \Theta_u(Q)\,\,=\,\,\max_{\lambda\in\mathbb{Z}^g} \,\bigl\{ -(\lambda+\frac{u}{2})^t Q(\lambda+\frac{u}{2})\,\bigr\}
\,    \,\,\leq \,
  \,\,\sum_{i=1}^m -\ell_i\cdot \min_{\lambda\in\mathbb{Z}^g}\biggl\{ \bigl(\,b_i^t\cdot (\lambda+\frac{u}{2})\,\bigr)^2 \biggr\}.
  $$
 If $\,b_i^t u\,$ is even, then $b_i^t\cdot (\lambda+\frac{u}{2})=0$ for some  $\lambda\in\mathbb{Z}^g$.
     Otherwise, the absolute value of $b_i^t\cdot (\lambda+\frac{u}{2})$ is at least $1/2$. 
     This shows that $\Theta_u(Q)\leq-\frac{1}{4}\cdot\sum_{b_i^t u\textnormal{ is odd}} \ell_i$.
To derive the reverse inequality, let $\,I = \bigl\{ i \,:\,
u_i \,\hbox{is odd} \bigr\} \subset \{1,\ldots,g\}$.
 By a result of Ghouila-Houri \cite{houri} on unimodular matrices,
 we can find $w\in\mathbb{Z}^g$ with $w_i=\pm1$ if $i\in I$ and $w_i=0$ otherwise, 
 such that $b_i^t\cdot w\in\{0,\pm1\}$ for all $1\leq i\leq m$. 
 The vector $\lambda_0 = \frac{1}{2}(w-u)$ lies in $\mathbb{Z}^g$.
 One checks that
  \[-(\lambda_0+\frac{u}{2})^t Q (\lambda_0+\frac{u}{2})
  \,=\,
  \sum_{i=1}^m -\ell_i\cdot (b_i^t\cdot (\lambda_0+\frac{u}{2}))^2
  \,=\,-\frac{1}{4}\sum_{i=1}^m \ell_i\cdot (b_i^t\cdot w)^2
  \,=\,-\frac{1}{4}\cdot\sum_{b_i^t u\textnormal{ is odd}} \!\!\ell_i.\] 
  Therefore, we also have $\,\Theta_u(Q)\geq-\frac{1}{4}\cdot\sum_{b_i^t u\textnormal{ is odd}} \ell_i$.
  This establishes the assertion in (\ref{eq:weclaim}).

We next claim that, under the same hypotheses as above, 
the function in (\ref{eq:vartheta})  satisfies
\begin{equation}
\label{eq:nextclaimthat}
  \vartheta_v(Q)\,\,=\,\,\,2^{g-3}\!\sum_{b_i\equiv v \,{\rm mod} \,2} \!\! \ell_i
 \qquad \hbox{for all}\,\, v \in \mathbb{Z}^g . 
\end{equation}
Indeed, substituting the right hand side of (\ref{eq:weclaim}) for $\Theta_u(Q)$ into (\ref{eq:vartheta}), we find that
  $$
 \vartheta_v(Q)\,\,=\,\,-\frac{1}{4}\cdot\sum_{u\in(\mathbb{Z}/2\mathbb{Z})^g}\sum_{b_i^t u \textnormal{ is odd}} 
 (-1)^{u^t v}\cdot  \ell_i \,\,=\,\,-\frac{1}{4}\cdot\sum_{i=1}^m \ell_i \cdot (|E_i|-|O_i|),
  $$
  where $E_i=\{u\in(\mathbb{Z}/2\mathbb{Z})^g: b_i^t u \,\,{\rm odd},\, u^t v \, \,{\rm even}\}$ and 
  $O_i=\{u \in(\mathbb{Z}/2\mathbb{Z})^g: b_i^t u \,\,{\rm odd}, \, u^t v \,\,{\rm odd}\}$. 
    If $b_i\equiv v \,{\rm mod}\,2$ then $E_i=\emptyset$ and $|O_i|=2^{g-1}$. Otherwise, $|E_i|=|O_i|=2^{g-2}$.
    This proves~(\ref{eq:nextclaimthat}).

Since $Q \in \mathfrak{J}_g^{\rm trop}$, this matrix comes from a graph $G$.
We may assume that $G$ has no $2$-valent vertices. This ensures
that any pair is independent in the cographic matroid of $G$.

The column $b_i$ of the matrix $B$ records the coefficients of the $i$-th edge
in a cycle basis of the graph $G$. The residue class of $b_i$ modulo $2$ is unique.
For $v \in \mathbb{Z}^g$ with $b_i \equiv v \,{\rm mod} \, 2 $, the sum in
(\ref{eq:nextclaimthat}) has only term $\ell_i$, and we have
$\ell_i = 2^{3-g}\vartheta_v(Q)$. If $v \in \mathbb{Z}^g$ is not congruent to $b_i$
for any $i$ then $\vartheta_v(Q) = 0$. This proves that the theta matroid
$M(Q)$ equals the cographic matroid of $G$, and the edge lengths $\ell_i$
are recovered from $Q$ by the rule in Theorem \ref{thm:recoverfromtheta}.
\end{proof}

By Theorem \ref{thm:recoverfromtheta}, the
non-negativity of $\vartheta_v(Q)$ is a necessary condition for $Q$ to be in 
 $\mathfrak{J}_g^{\rm trop}$.
 
\begin{example} \rm
For the matrix $Q$ in Example \ref{ex-decision}, we find $\vartheta_{0001}(Q)=-\frac{1}{2}$. 
Hence $\,Q \not\in \mathfrak{J}_4^{\rm trop}$.
\end{example}

This necessary (but not sufficient) condition translates into the following algorithm:

\begin{algorithm}[Tropical Schottky Recovery]   \label{alg:TSR2}
\underbar{Input}: $Q \in \mathfrak{J}_g^{\rm trop}$. \\
\underbar{Output}: A metric graph $\Gamma $ whose Riemann matrix $Q_\Gamma$ equals $Q$. \\
1. Compute the theta matroid $M(Q)$. It is cographic and determines a unique graph $G$. \\
2. Compute all edge lengths using the formula $\ell_i = 2^{3-g}\vartheta_v(Q)$. Set
 $\,D = {\rm diag}(\ell_1,\ldots,\ell_m)$. \\
 3. Output the metric graph $(G,D)$.  \\
4. (Optional) As in Algorithm \ref{alg:TSR}, find a basis $B$ such that $BDB^t = Q$.
\end{algorithm}

\begin{example} \rm
Let $Q$ be the matrix  in Example \ref{ex-recov}.
For each $u \in (\mathbb{Z}/2\mathbb{Z})^4$, we list
the theta constant $\Theta_u(Q)$, the weight $2^{-1}\vartheta_u(Q)$ 
and the label of the corresponding edge in Figure \ref{fig-graph1}:
\setcounter{MaxMatrixCols}{20}
$$
\begin{small}
\begin{matrix}
  u &  \! 0001 \! & \! 0010 \! & \! 0011 \!& \! 0100 \! & \! 0101 \! & \! 0110 \! & \! 0111 \! &
  \! 1000 \! & \! 1001 \! &\! 1010 \! &\! 1011 \! &\! 1100\! &\! 1101 \! &\! 1110 \! &\!1111 \smallskip \\
-\Theta_u &\frac{29}{4} & \frac{23}{4} &5& \frac{19}{4} & \frac{13}{2} & 7 & \frac{31}{4} & \frac{17}{4} & 9 & \frac{17}{2} & \frac{33}{4} & \frac{13}{2} & \frac{43}{4} & \frac{41}{4}& \frac{21}{2} \smallskip \\
   2^{-1}\vartheta_u  &9&7&9&8&2&0&4&12&0&0&0&0&2&0&3 \\
   {\rm Edge} & e_2 & e_1 & e_3 & e_6 & e_7 &-& e_8 & e_9 &-&-&-&- & e_4 &-& e_5    \\
\end{matrix}
\end{small}
$$
\end{example}

We now explain the connection between the classical 
and tropical theta functions. In particular, we will show how 
the process of tropicalization relates
Theorems \ref{thm:igusa} and \ref{thm:recoverfromtheta}.

In order to tropicalize the Schottky--Igusa modular form, we must study the order of growth of the theta constants when the entries of the Riemann matrix grow. This information is captured by the tropical theta constants.
 The following proposition makes that precise.

\begin{proposition} \label{lem:tropicallimit}
Fix $Q \in \mathfrak{H}_g^{\rm trop}$, and let $P(t)$ be any real symmetric
$g \times g$-matrix that depends on a parameter $t\in\RR$.
  For every $m\in(\ZZ/2\ZZ)^{2g}$ there is a constant $C \in\RR$ such that
\begin{equation}
\label{eq:thetaratio}
 0 \,\,\leq \,\,\frac{| \,\theta[m](P(t)+t\cdot \textnormal{i} Q, 0)\, |}{|\, \exp (t \cdot\pi\cdot  \Theta_{m'}(Q)) \,|}\leq
 \,\, C\,\, \quad  \hbox{for all $\,t\geq0$}.
\end{equation}
Moreover, we can choose $P(t)$ such  that the ratio above does not approach zero for $t\to\infty$.
\end{proposition}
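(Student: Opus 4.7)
The plan is to estimate the theta series (\ref{eqn:thetafunc}) term by term. After substituting $\tau = P(t) + t\textnormal{i}Q$, split each exponent into its real and imaginary parts: the absolute value of the summand indexed by $n\in\mathbb{Z}^g$ depends only on the imaginary part $tQ$ and equals $\exp[-\pi t\,(n+m'/2)^t Q (n+m'/2)]$, while $P(t)$ and $m''$ contribute only a unimodular phase. A short computation---substitute $\lambda\mapsto -\lambda$ in (\ref{eq:troptheta}) and combine with (\ref{eq:tropthetaconst})---yields the identity $\Theta_{m'}(Q) = -\min_{\lambda\in\mathbb{Z}^g}(\lambda+m'/2)^t Q(\lambda+m'/2)$. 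Setting $\delta_n := (n+m'/2)^t Q(n+m'/2)+\Theta_{m'}(Q)\geq 0$, each summand factors as $\exp[\pi t\,\Theta_{m'}(Q)]\cdot\exp[-\pi t\,\delta_n]\cdot e^{\textnormal{i}\phi_n(t)}$ for some real phase $\phi_n(t)$.

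The upper bound in (\ref{eq:thetaratio}) then follows from the triangle inequality:
\[\frac{|\theta[m](\tau,0)|}{\exp[\pi t\,\Theta_{m'}(Q)]}\;\leq\;\sum_{n\in\mathbb{Z}^g}\exp[-\pi t\,\delta_n].\]
Since $Q$ is positive definite, $\delta_n$ grows quadratically in $\|n\|$, so this series converges and is bounded uniformly for $t\geq t_0 > 0$; the behavior near small $t$ is handled by continuity of $\theta[m]$ on $\mathfrak{H}_g$.

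For the second claim I would isolate the finite set $S\subset\mathbb{Z}^g$ of minimizers, i.e.\ those $n$ with $\delta_n = 0$, and observe that indices $n\notin S$ contribute $O(e^{-ct})$ relative to $\exp[\pi t\,\Theta_{m'}(Q)]$ for some $c > 0$. Thus for $P(t)\equiv P$ constant, the ratio tends to $|F(P)|$ with
\[F(P)\;:=\;\sum_{n\in S}\exp\bigl[\textnormal{i}\pi(n+m'/2)^t P(n+m'/2)+\textnormal{i}\pi(n+m'/2)^t m''\bigr].\]
The lattice symmetry $n\leftrightarrow -n-m'$ pairs up $S$, and for even $m$ the two members of each pair share the same rank-one frequency $(n+m'/2)(n+m'/2)^t$ and the same phase sign, so $F(P)$ collapses to a sum over $S/\{\pm 1\}$ of terms $\pm 2\exp[\textnormal{i}\pi(n+m'/2)^t P(n+m'/2)]$ with pairwise distinct rank-one frequencies. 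Such a nonzero trigonometric polynomial on $\mathrm{Sym}_g(\RR)$ has a proper real-analytic zero locus, so any constant $P$ outside it gives $|F(P)|>0$. The main obstacle is verifying that $F$ is not identically zero: the pairing argument relies crucially on the even-characteristic hypothesis (for odd $m$ one has $\theta[m](\tau,0)\equiv 0$, making the second claim vacuous), and on recognizing the distinct rank-one outer products $(n+m'/2)(n+m'/2)^t$ as independent ``frequencies'' of the trigonometric sum in $P$.
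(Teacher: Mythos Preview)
Your approach coincides with the paper's: both isolate the dominant summands (those $n$ minimizing $(n+m'/2)^tQ(n+m'/2)$), bound the tail, and then choose $P$ to avoid cancellation among the finitely many leading terms. Your treatment of the second claim---pairing $n\leftrightarrow -n-m'$, invoking the even-characteristic hypothesis to make each pair contribute $\pm 2\exp[\textnormal{i}\pi(n+m'/2)^tP(n+m'/2)]$, and noting that distinct rank-one outer products give linearly independent characters on $\mathrm{Sym}_g(\RR)$---is considerably more explicit than the paper's one-sentence assertion that ``we can choose $P(t)$ in such a way that no cancellation of highest order terms happens.''

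One minor gap: appealing to continuity of $\theta[m]$ on $\mathfrak{H}_g$ does not cover $t\to 0^+$, since $P(t)+t\textnormal{i}Q$ leaves $\mathfrak{H}_g$ there and the majorizing series $\sum_n e^{-\pi t\delta_n}$ diverges. The paper's proof is equally silent on this point, and the bound should really be read asymptotically; your estimate $\sum_n e^{-\pi t\delta_n}\leq \sum_n e^{-\pi\delta_n}<\infty$ already handles all $t\geq 1$.
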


  Here $ \theta[m](\tau,0)$ is the classical theta constant from
   (\ref{eqn:thetafunc}), and    $\Theta_{m'}(Q)$ is the tropical theta constant
   defined  in (\ref{eq:tropthetaconst}).
We use the notation $m = (m',m'')$ for vectors in $\mathbb{Z}^{2g}$ as in Section~\ref{sec:classic}.

\begin{proof}
Consider the lattice points $\lambda$ where the maximum in
(\ref{eq:troptheta}) for $x = m'/2$ is attained.
The corresponding summands in (\ref{eqn:thetafunc}) with $\lambda = n$ 
have the same asymptotic behavior as $\exp (t \cdot\pi  \Theta_{m'}(Q))$ for $t\to\infty$.
 The sum over the remaining exponentials tends to zero since it can be bounded by a sum of finitely many Gaussian integrals with variance going to zero for $t\to\infty$. We can choose the
 real symmetric matrix $P(t)$ 
 in such a way that no cancellation of highest order terms happens. Then the expression 
 in (\ref{eq:thetaratio}) is bounded away from zero.
 \end{proof}

\begin{remark}
 On the Siegel upper-half space $\mathfrak{H}_g$ we have an action by the symplectic group $\textnormal{Sp}_{2g}(\mathbb{Z})$. Two matrices from the same orbit under this action correspond to the same abelian variety. However their tropicalizations may vary drastically. Consider for example the case $g=1$: $\begin{pmatrix}
               0&-1\\                                                                                                                                                                                                                                                                                                                                                                                         
               1&k                                                                                                                                                                                                                                                                                                                                                                                        \end{pmatrix}\in\textnormal{Sp}_{2}(\mathbb{Z})$
 sends $\tau=\textnormal{i}$ to a complex number with imaginary part $\frac{1}{1+k^2}$. 
\end{remark}

We now assume that $g=4$.
For any subset $M\subset (\ZZ/2\ZZ)^{8}$ we write $M'=\{m':\, m\in M\}$ and similarly for $M''$.
The following lemma concerns the possible choices
 for Theorem \ref{thm:igusa}.

\begin{lemma}\label{cor:conj} For any azygetic triple $\{m_1,m_2,m_3\}$ and any matching
subgroup $N \subset (\mathbb{Z}/2\mathbb{Z})^8$,
\begin{itemize}
\item[(1)] there exist indices $1 \leq i < j \leq 3$ such that $(m_i+N)'=(m_j+N)'$, and \vspace{-0.1in}
\item[(2)] if $\dim N' =3$ and $(m_1+N)'=(m_2+N)'\neq (m_3+N)'$, then $m_1',m_2'\in N'$.
\end{itemize}
\end{lemma}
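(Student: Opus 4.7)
The plan is to rephrase everything via the $\mathbb{F}_2$-quadratic form $q(m) := (m')^t m'' \pmod 2$ on $(\mathbb{Z}/2\mathbb{Z})^{2g}$ and its associated alternating bilinear form $\langle a, b \rangle := (a')^t b'' + (a'')^t b'$, which satisfy the cocycle relation $q(a+b) = q(a) + q(b) + \langle a, b \rangle$. Then $e(m) = (-1)^{q(m)}$, evenness of $m$ reads $q(m) = 0$, and a short calculation rewrites azygeticity of $\{m_1,m_2,m_3\}$ as $\langle m_1, m_2 \rangle + \langle m_1, m_3 \rangle + \langle m_2, m_3 \rangle \equiv 1 \pmod 2$. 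The hypothesis that every element of $m_i + N$ is even unfolds to $\langle m_i, n \rangle = q(n)$ for all $n \in N$ and $i = 1, 2, 3$; subtracting for distinct $i,j$ gives the crucial consequence $m_i + m_j \in N^\perp$, where $N^\perp$ denotes the symplectic complement in $(\mathbb{Z}/2\mathbb{Z})^{8}$. Applying the same identity to two elements of $N$ shows that $N$ is isotropic, so $N \subset N^\perp$.

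For Part (1), I would carry out a single dimension count. Let $\pi'$ denote the projection onto the first four coordinates and set $V := \pi'(N^\perp)$. The kernel of $\pi'|_{N^\perp}$ equals $\{(0, a'') : a'' \perp N'\}$, of dimension $4 - \dim N'$, and combined with $\dim N^\perp = 5$ this gives $\dim V = 1 + \dim N'$, so $V/N'$ has exactly two elements. The vectors $m_1+m_2$, $m_1+m_3$, $m_2+m_3$ all lie in $N^\perp$, and their classes in $V/N'$ sum to zero, so at least one of them must vanish. That means $m_i' + m_j' \in N'$ for some $i \neq j$, i.e. $(m_i + N)' = (m_j + N)'$.

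For Part (2), $\dim N' = 3 = \dim N$ forces $N \to N'$ to be an isomorphism with inverse $\psi : N' \to N$. A preliminary observation is that replacing any $m_i$ by $m_i + n$ with $n \in N$ preserves both the evenness hypotheses and the azygetic indicator; this follows from $\langle m_1 + m_2 + m_3, n \rangle = 3\, q(n) \equiv q(n) \pmod 2$, which exactly cancels the sign contributions from $e(n)$. Assuming for contradiction that $m_1', m_2' \notin N'$ while $m_3' \in N'$, I translate $m_3$ by $\psi(m_3')$ and $m_2$ by $\psi(m_1' + m_2')$, bringing the triple to the normal form $m_3 = (0, c)$ and $m_1 = (s, t_1)$, $m_2 = (s, t_2)$ with $s = m_1' = m_2' \notin N'$. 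Evenness of $m_1, m_2$ forces $s^t t_1 = s^t t_2 = 0$, so
\[
\langle m_1, m_2 \rangle \,=\, s^t(t_1 + t_2) \,=\, 0, \qquad \langle m_1, m_3 \rangle \,=\, \langle m_2, m_3 \rangle \,=\, s^t c,
\]
and summing yields $2\, s^t c \equiv 0 \pmod 2$, contradicting the azygetic identity.

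The main subtlety I anticipate is the normalization step in Part (2): one has to carefully verify that shifting each $m_i$ within its $N$-coset preserves azygeticity, which depends on the identity $\langle m_1+m_2+m_3, n\rangle \equiv q(n)$ extracted from the matching hypothesis. Once that invariance is in hand, both statements reduce to transparent $\mathbb{F}_2$-linear algebra inside the symplectic space $(\mathbb{Z}/2\mathbb{Z})^8$.
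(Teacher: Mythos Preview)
Your argument is correct. One small expositional point in Part~(2): you write ``assuming for contradiction that $m_1',m_2'\notin N'$ while $m_3'\in N'$'' as if all three conditions were being assumed, but in fact only $m_1'\notin N'$ needs to be assumed. The hypothesis $(m_1+N)'=(m_2+N)'\neq(m_3+N)'$ says $m_1'\equiv m_2'\not\equiv m_3'$ in the two-element quotient $(\ZZ/2\ZZ)^4/N'$, so once $m_1'\notin N'$ the other two conclusions $m_2'\notin N'$ and $m_3'\in N'$ follow automatically. With that clarification the normalization step and the final parity contradiction go through exactly as you wrote.

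Your approach is genuinely different from the paper's. The paper disposes of this lemma in a single sentence: it is a finite combinatorial assertion about $(\ZZ/2\ZZ)^8$ and is verified by exhaustive computer search over all azygetic triples and matching subgroups. You instead give a conceptual proof by passing to the quadratic form $q$ and its associated symplectic form on $(\ZZ/2\ZZ)^{2g}$, reducing both parts to clean $\mathbb{F}_2$-linear algebra inside $N^\perp$. Your dimension count in Part~(1) is particularly nice: it shows $\dim\pi'(N^\perp)/N'=1$ with no case analysis, and the argument visibly generalizes to arbitrary $g$. The cost is that you need the cocycle identity and the observation $\langle m_i,n\rangle=q(n)$, but these are standard and your derivation of the azygeticity invariance under $N$-translation (via $\langle m_1+m_2+m_3,n\rangle=3q(n)$) is exactly the right way to handle the normalization in Part~(2). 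The paper's approach buys brevity; yours buys insight and portability.
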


\begin{proof}
This purely combinatorial statement can be proved by exhaustive computation.
\end{proof}

For instance, consider the specific choice of $m_1,m_2,m_3,N$ made prior to Example \ref{exp:schottkycurve}.
This has $\dim N'=2$, and Lemma \ref{cor:conj} (1) holds with $i= 2$, $j= 3$.
If we exchange the first four coordinates with the last four coordinates, then 
$\dim N'=3$, $m_1',m_3'\in N'$ and $m_2'\not\in N'$.

Recall from Theorem \ref{thm:igusa} that a
 matrix $\tau\in\mathfrak{H}_4$ is in the Schottky locus if and only if $\pi_1^2+\pi_2^2+\pi_3^2-2(\pi_1\pi_2+\pi_1\pi_3+\pi_2\pi_3)$ vanishes. The tropicalization of this expression equals
 \begin{equation}
 \label{eq:tropicalschottkyigusa}
  \max_{i,j=1,2,3}( \pi^{\textnormal{trop}}_i+\pi^{\textnormal{trop}}_j), 
  \end{equation}
 where $\pi^{\textnormal{trop}}_i=\sum_{m\in m_i+N} \Theta_{m'}(Q)$
 is the tropicalization of the product (\ref{eq:pidef}), with $Q=
 {\rm im}(\tau)$.
 
 The {\em tropical Schottky--Igusa modular form} (\ref{eq:tropicalschottkyigusa})
defines a piecewise-linear convex function $\mathfrak{H}_4^{\rm trop} \rightarrow \RR$.
Its breakpoint locus  is the set of  Riemann matrices $Q$ for which the maximum in (\ref{eq:tropicalschottkyigusa})
  is attained twice. That set depends on our choice of $m_1,m_2,m_3,N$.
That choice is called {\em admissible} if $N\subset (\ZZ/2\ZZ)^8$ has rank three, the triple
$\{m_1,m_2,m_3\} \subset (\ZZ/2\ZZ)^8$ is azygetic, all elements of $m_i+N$ are even, 
\underbar{and} the group $N' \subset (\ZZ/2\ZZ)^4$ also has rank three.
We define the {\em tropical Igusa locus} 
in $\mathfrak{H}_g^{\rm trop}$ to be the intersection,
over all admissible choices $m_1,m_2,m_3,N$, of
the breakpoint loci of the tropical modular forms $(\ref{eq:tropicalschottkyigusa})$.

\begin{theorem}
\label{thm:admissible}
A matrix $Q \in \mathfrak{H}_4^{\rm trop}$ lies in the tropical Igusa locus
if and only if   $\vartheta_v(Q)\geq0$ for all $v\in\ZZ^4$.
That locus contains
the tropical Schottky locus $\mathfrak{J}_4^{\rm trop}\!$,
but they are not equal.
\end{theorem}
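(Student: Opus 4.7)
\emph{Plan.} The strategy is to compute the breakpoint locus of $\max_{i,j}(\pi_i^{\rm trop}+\pi_j^{\rm trop}) = 2\max_i \pi_i^{\rm trop}$ explicitly for each admissible quadruple $(m_1,m_2,m_3,N)$, and then intersect. Admissibility forces the projection $N\to N'$ to be a bijection; since $N'\subset(\mathbb{Z}/2\mathbb{Z})^4$ has codimension one, only two cosets of $N'$ exist, namely $N'$ itself and its complement. Hence each coset $(m_i+N)'$ is one of these, and $\pi_i^{\rm trop}(Q)=\sum_{m\in m_i+N}\Theta_{m'}(Q)$ equals either $A:=\sum_{u\in N'}\Theta_u(Q)$ or $B:=\sum_{u\notin N'}\Theta_u(Q)$. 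Let $v_0\in(\mathbb{Z}/2\mathbb{Z})^4\setminus\{0\}$ be the unique character with $N'=v_0^\perp$; since $(-1)^{u^t v_0}=+1$ for $u\in N'$ and $-1$ otherwise, one obtains $\vartheta_{v_0}(Q)=A-B$.

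Now invoke Lemma \ref{cor:conj}. By part (1), at least two of the three cosets $(m_i+N)'$ coincide. If all three coincide, then $\pi_1^{\rm trop}=\pi_2^{\rm trop}=\pi_3^{\rm trop}$ as functions of $Q$, so the maximum in \eqref{eq:tropicalschottkyigusa} is attained by all nine ordered pairs and the admissible choice imposes no constraint. Otherwise exactly two cosets coincide, and part (2) identifies those two as $N'$ itself; after relabeling, $\pi_1^{\rm trop}=\pi_2^{\rm trop}=A$ and $\pi_3^{\rm trop}=B$. The maximum is then attained twice precisely when $A\geq B$, that is $\vartheta_{v_0}(Q)\geq 0$. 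Therefore the tropical Igusa locus is the intersection, over non-degenerate admissible quadruples, of the regions $\{\vartheta_{v_0(N)}\geq 0\}$.

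It remains to verify that $v_0(N)$ attains every nonzero class of $(\mathbb{Z}/2\mathbb{Z})^4$ as $(m_1,m_2,m_3,N)$ varies over non-degenerate admissible quadruples. This is a finite check: the coordinate-swapped version of the quadruple displayed before Example \ref{exp:schottkycurve} supplies one $v_0$, and the remaining $14$ cases follow by direct enumeration or by exploiting the natural $\mathrm{Sp}_8(\mathbb{F}_2)$-action on Schottky data, which permutes the dual characters. This yields the biconditional: $Q$ lies in the tropical Igusa locus if and only if $\vartheta_v(Q)\geq 0$ for every nonzero $v\in(\mathbb{Z}/2\mathbb{Z})^4$.

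Containment of $\mathfrak{J}_4^{\rm trop}$ in the tropical Igusa locus is immediate from formula \eqref{eq:nextclaimthat} in the proof of Theorem \ref{thm:recoverfromtheta}, which expresses $\vartheta_v(Q)$ as a non-negative combination of edge lengths when $Q=Q_\Gamma$. For strict containment, one needs a positive definite $Q$ outside $\mathfrak{J}_4^{\rm trop}$ with $\vartheta_v(Q)\geq 0$ for every nonzero $v$; such a $Q$ can be exhibited by computing the $\vartheta$-values on representatives of the $36$ non-cographic Delaunay types in Vallentin's list \cite{frank}, and a dimension count ($\dim \mathfrak{J}_4^{\rm trop}=9$ inside the $10$-dimensional $\mathfrak{H}_4^{\rm trop}$) makes the existence of such a matrix expected. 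The main obstacle in the proof is the coverage step: showing that the dual characters $v_0(N)$ truly exhaust $(\mathbb{Z}/2\mathbb{Z})^4\setminus\{0\}$. The rest is bookkeeping.
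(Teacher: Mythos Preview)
Your argument follows the same route as the paper's: use Lemma~\ref{cor:conj} to show that for each admissible choice the breakpoint condition becomes $\sum_{u\in N'}\Theta_u(Q)\geq\sum_{u\notin N'}\Theta_u(Q)$, rewrite this as $\vartheta_{v_0}(Q)\geq 0$ for the unique nonzero $v_0\in (N')^\perp$, and then check that every nonzero $v_0$ occurs. The paper carries out the coverage step more concretely: it reduces by coordinate permutation to the four weight classes $v\in\{(1000),(1100),(1110),(1111)\}$ and exhibits an explicit admissible quadruple for each. Your appeal to the $\mathrm{Sp}_8(\mathbb{F}_2)$-action here is not quite right, since admissibility depends on the non-symplectic condition $\dim N'=3$; what works is the $\mathrm{GL}_4(\mathbb{F}_2)$-subgroup acting via $(m',m'')\mapsto(Am',(A^{t})^{-1}m'')$, which preserves admissibility and moves $v_0$ transitively. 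For strict containment, the paper is also sharper: rather than searching the $36$ non-cographic types, it observes that the proof of \eqref{eq:nextclaimthat} uses only unimodularity of $B$, so the entire \emph{zonotopal locus} (all $Q=BDB^t$ with $B$ unimodular) satisfies $\vartheta_v\geq 0$, and then cites from \cite[\S 4.4.4]{frank} the existence of a non-cographic $9$-dimensional zonotopal cone.
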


\begin{proof}
We are interested in how the maximum in (\ref{eq:tropicalschottkyigusa}) is attained.
By Lemma \ref{cor:conj} (1), after relabeling, $\,\pi^{\textnormal{trop}}_1=\pi^{\textnormal{trop}}_2$.
   The maximum is attained twice  if and only if $\pi^{\textnormal{trop}}_1\geq \pi^{\textnormal{trop}}_3$.
   By Lemma \ref{cor:conj} (2),    this is equivalent to 
\begin{equation}
\label{eq:inout}
\sum_{u\in N'} \Theta_{u}(Q)\,\,\geq \,\,\sum_{u\not\in N'} \Theta_{u}(Q). 
\end{equation}
Let $v $ be the non-zero vector in $(\ZZ/2\ZZ)^{4}$ that is orthogonal to $N'$. 
Then (\ref{eq:inout}) is equivalent to 
$$ \vartheta_v(Q) \,\,\, = 
\sum_{u\in(\ZZ/2\ZZ)^{4}}\!\!(-1)^{u^t v} \Theta_u(Q) 
\,\,\, \geq \,\,\, 0 . $$
This proves the first assertion, if we knew that
 every $v$ arises from some admissible choice.
 
We saw in Theorem \ref{thm:recoverfromtheta}
that $\vartheta_v(Q) \geq 0 $  for all $v$
whenever $Q \in \mathfrak{J}_4^{\rm trop}$.
Hence the tropical Schottky locus $\mathfrak{J}_4^{\rm trop}$ is contained in the tropical Igusa locus.
The two loci are not equal because the latter contains the 
{\em zonotopal locus} of $\mathfrak{H}_4^{\rm trop}$. This consists of matrices
$Q = BDB^t$ where $B$ represents any unimodular matroid, not necessarily
cographic. By  \cite[\S 4.4.4]{frank},  the second Voronoi decomposition
of $\mathfrak{H}_4^{\rm trop}$ has a non-cographic $9$-dimensional cone in its zonotopal locus.
It is unique modulo ${\rm GL}_4(\ZZ)$. We
verified that all $16$ tropical modular forms $\vartheta_v$ are non-negative on that cone.
This establishes the last assertion in Theorem \ref{thm:admissible}.

To finish the proof, we still need that every 
$v \in (\ZZ/2\ZZ)^{4} \backslash \{0\}$ is orthogonal to $N'$
for some admissible choice $m_1,m_2,m_3,N$.
By permuting coordinates,
  it suffices to show this~for 
  $$v \,\in \, \{(1,0,0,0)^t, (1,1,0,0)^t, (1,1,1,0)^t, (1,1,1,1)^t\}.$$ 
  For $v=(1,0,0,0)^t$ we take
$$
\begin{small}
 m_1=\begin{pmatrix}0 & 1 \\ 0 & 0 \\ 0 & 0 \\ 0 & 1     \end{pmatrix}\!,\,
m_2=\begin{pmatrix} 1 & 1 \\ 1 & 0 \\ 1 & 1 \\ 0 & 1    \end{pmatrix}\!,\,
m_3=\begin{pmatrix} 0 & 1 \\ 0 & 0 \\ 1 & 0 \\ 1 &  0  \end{pmatrix}\!,\,\,
  n_1=\begin{pmatrix} 0 & 1 \\ 1 & 0 \\ 0 &1 \\ 0 & 1  \end{pmatrix}\!,\,
  n_2=\begin{pmatrix} 0 & 1 \\ 0 & 0 \\ 1 & 0 \\ 1 & 1   \end{pmatrix}\!,\,
  n_3=\begin{pmatrix} 0 & 0 \\ 0 & 1  \\ 1 & 0\\ 0 & 0 \end{pmatrix}\!.
  \end{small}
$$ 
For $v=(1,1,0,0)^t$ we take
$$
\begin{small}
 m_1=\begin{pmatrix}0 & 1 \\ 0 & 0 \\ 0 & 0 \\ 0 & 1     \end{pmatrix}\!,\,
m_2=\begin{pmatrix} 1 & 1 \\ 1 & 0 \\ 1 & 1 \\ 0 & 0    \end{pmatrix}\!,\,
m_3=\begin{pmatrix} 1 & 0 \\ 0 & 0 \\ 0 & 0 \\ 0 &  1  \end{pmatrix}\!,\,\,
  n_1=\begin{pmatrix} 1 & 1 \\ 1 & 1 \\ 0 &1 \\ 1 & 0  \end{pmatrix}\!,\,
  n_2=\begin{pmatrix} 0 & 0 \\ 0 & 1 \\ 1 & 0 \\ 0 & 0   \end{pmatrix}\!,\,
  n_3=\begin{pmatrix} 0 & 0 \\ 0 & 1  \\ 0 & 0\\ 1 & 1 \end{pmatrix}\!.
\end{small}
$$ 
For $v=(1,1,1,0)^t$ we take
$$
\begin{small}
 m_1=\begin{pmatrix}1 & 0 \\ 0 & 0 \\ 0 & 0 \\ 1 & 0     \end{pmatrix}\!,\,
m_2=\begin{pmatrix} 0 & 1 \\ 0 & 1 \\ 0 & 1 \\ 1 & 0    \end{pmatrix}\!,\,
m_3=\begin{pmatrix} 0 & 0 \\ 0 & 0 \\ 0 & 0 \\ 1 &  0  \end{pmatrix}\!,\,\,
  n_1=\begin{pmatrix} 0 & 0 \\ 0 & 0 \\ 0 &0 \\ 1 & 1  \end{pmatrix}\!,\,
  n_2=\begin{pmatrix} 1 & 0 \\ 0 & 0 \\ 1 & 0 \\ 1 & 1   \end{pmatrix}\!,\,
  n_3=\begin{pmatrix} 1 & 0 \\ 1 & 0  \\ 0 & 0\\ 0 & 0 \end{pmatrix}\!.
  \end{small}
$$
For $v=(1,1,1,1)^t$ we take
$$
\begin{small}
 m_1=\begin{pmatrix}1 & 0 \\ 0 & 1 \\ 0 & 1 \\ 0 & 0     \end{pmatrix}\!,\,
m_2=\begin{pmatrix} 0 & 1 \\ 0 & 1 \\ 1 & 0 \\ 1 & 0    \end{pmatrix}\!,\,
m_3=\begin{pmatrix} 0 & 1 \\ 0 & 0 \\ 0 & 1 \\ 0 &  1  \end{pmatrix}\!,\,\,
  n_1=\begin{pmatrix} 1 & 0 \\ 1 & 1 \\ 0 &0 \\ 0 & 1  \end{pmatrix}\!,\,
  n_2=\begin{pmatrix} 1 & 0 \\ 0 & 1 \\ 0 & 1 \\ 1 & 0   \end{pmatrix}\!,\,
  n_3=\begin{pmatrix} 1 & 1 \\ 0 & 1  \\ 1 & 1\\ 0 & 0 \end{pmatrix}\!.
  \end{small}
$$ 
This completes the proof of Theorem \ref{thm:admissible}.
\end{proof}

We have shown that the tropicalization of the classical Schottky locus
satisfies the constraints coming from the tropical Schottky--Igusa modular forms
in (\ref{eq:tropicalschottkyigusa}). However, these constraints are not yet tight.
The tropical Igusa locus, as we have defined it, is strictly larger than the tropical
Schottky locus. It would be desirable to close this gap, at least for $g=4$.
One approach might be a more inclusive definition of which choices are ``admissible''.

\begin{question}
Can the tropical Schottky locus $\,\mathfrak{J}_4^{\rm trop}$ be cut out by
additional tropical modular forms, notably those obtained in  (\ref{eq:tropicalschottkyigusa})
by allowing choices $\,m_1,m_2,m_3,N\,$ with $\dim N' \leq 2 $?
\end{question}

The next question concerns arbitrary genus $g$.
We ask whether just computing the theta matroid $M(Q)$
solves the Tropical Schottky Decision problem. Note that we
did not address this subtle issue in Algorithm \ref{alg:TSR2} because
we had assumed that the input $Q $ lies in $ \mathfrak{J}_g^{\rm trop}$.

\begin{question}\label{qu:thetamatroid}
 Let $Q$ be a positive definite $g\times g$ matrix such that the 
 matroid $M(Q)$ is cographic with positive weights. Does this imply that $Q$ is in the tropical Schottky locus?
\end{question}

If the answer is affirmative then we can use Tutte's classical algorithm \cite{tutte} as
a subroutine for Schottky Decision. That algorithm can decide whether the
matroid $M(Q)$ is cographic.
%
%
We close with a question that pertains to classical Schottky Reconstruction  as in Section~\ref{sec:classic}.

\begin{question}\label{qu:riccardo}
How to generalize the results in {\rm \cite{bitangents}} from $g=3$ to $g=4$?
Is there a nice {\em tritangent matrix}, written explicitly in theta constants, for canonical curves of genus four?
\end{question}

\bigskip 
\medskip

\noindent
{\bf Acknowledgments.}
We thank Riccardo Salvati Manni for telling us about Kempf's article \cite{kempf}.
We also had helpful conversations with Christian Klein and Emre Sert\"oz.
Lynn Chua was supported by a UC Berkeley University Fellowship and the Max Planck Institute for Mathematics in the Sciences, Leipzig.
Bernd Sturmfels received funding from the US National Science
Foundation (DMS-1419018) and the Einstein Foundation Berlin.

\bigskip

\begin{small}


\footnotesize {
\noindent \bf Authors' addresses:} \smallskip \\
\noindent
Lynn Chua, UC Berkeley, {\tt chualynn@berkeley.edu} \\
Mario Kummer, TU Berlin, {\tt kummer@tu-berlin.de} \\
Bernd Sturmfels, MPI Leipzig,  {\tt bernd@mis.mpg.edu}  \
and UC Berkeley, \texttt{bernd@berkeley.edu}

\end{small}
\end{document}